\numberwithin{equation}{section}
\newtheorem{theorem}{Theorem}[section]
\newtheorem{lemma}[theorem]{Lemma}
\newtheorem{example}[theorem]{Example}
\newtheorem{proposition}[theorem]{Proposition}
\newtheorem{corollary}[theorem]{Corollary}
\newcommand{\C}{\mathbb{C}}
\newcommand{\N}{\mathbb{N}}
\newcommand{\Z}{\mathbb{Z}}
\renewcommand{\P}{\mathbb{P}}
\newcommand{\ord}{\textrm{ord}}
\newcommand{\rad}{\textrm{rad}}
\begin{document}

\title{Mason's theorem with a difference radical}

\author{Katsuya Ishizaki}
\address{The Open University of Japan, 2-11 Wakaba, Mihama-ku, Chiba, 261-8586 JAPAN}
\email{ishizaki@ouj.ac.jp}

\author{Risto Korhonen}
\address{Department of Physics and Mathematics, University of Eastern Finland, P.O. Box 111,
FI-80101 Joensuu, Finland}
\email{risto.korhonen@uef.fi}

\author{Nan Li}

\address{University of Jinan, School of Mathematical Sciences, Jinan, Shandong, 250022, P.R. China}
\email{sms\_lin@ujn.edu.cn}

\author{Kazuya Tohge}
\address{College of Science and Engineering, Kanazawa University, Kakuma-machi, Kanazawa, 920-1192, Japan}
\email{tohge@se.kanazawa-u.ac.jp}

\thanks{The first author was supported by the discretionary budget (2017) of the President of the Open University of Japan. The second author wishes to acknowledge partial support by the Academy of Finland grant (\#286877). The third author was supported in part by the grant 11626112 from the NSFC Tianyuan Mathematics Youth Fund,
the grant ZR2016AQ20, the NNSF of China No.11371225, and the grant XBS1630 from the Fund of Doctoral Program Research of University of Jinan.
The work of the fourth author was supported by JSPS KAKENHI Grant Number JP16K05194.}

\subjclass[2010]{Primary 30D35; Secondary 30C10, 39A10}

\date{\today}

\commby{}

\begin{abstract}
Differential calculus is not a unique way to observe polynomial equations such as $a+b=c$.
We propose a way of applying difference calculus to estimate multiplicities of the roots of the polynomials $a$, $b$ and $c$ satisfying the equation above.
Then a difference $abc$ theorem for polynomials is proved using a new notion of a radical of a polynomial. Two results on the non-existence of polynomial solutions to difference Fermat type functional equations are given as applications. We also introduce a truncated second main theorem for differences, and use it to consider difference Fermat type equations with transcendental entire solutions.
\end{abstract}

\maketitle



\section{Introduction}

Mason's theorem states that if relatively prime polynomials $a$, $b$ and $c$, not all of them identically zero, satisfy
    \begin{equation*}
    a+b=c,
    \end{equation*}
then $\deg c\leq \deg\rad(abc)-1$, where the radical $\rad(abc)$ is the product of distinct linear factors of $abc$ \cite{mason:84,stothers:81}, see also \cite{snyder:00}. An elementary application of Mason's theorem is that if $x$, $y$ and $z$ are non-trivial relatively prime polynomials satisfying
    \begin{equation}\label{polyFermat}
    x^n + y^n = z^n,
    \end{equation}
where $n\in\N$, then $n\leq 2$. Mason's theorem is a counterpart of the $abc$ conjecture in number theory, while its consequence described above is Fermat's last theorem for polynomials (see, e.g., \cite{laeng:99,lang:90}).

Fermat type functional equations, such as \eqref{polyFermat} and its generalizations have been studied over many function fields \cite{dyakonov:12,hayman:84,huy:02}  (see also, e.g., \cite{gundersenh:04} and the references therein). For instance, if
    \begin{equation}\label{genFermat}
    f_1^n + f_2^n +  \cdots + f_m^n = 1
    \end{equation}
has a solution consisting of $m$ polynomials $f_1,f_2,\ldots,f_m$, then $n\leq m^2 - m -1$. For rational, entire and meromorphic solutions the corresponding bounds are $n\leq m^2-2$, $n\leq m^2 - m$ and $n\leq m^2 -1$, respectively \cite{hayman:84}. Hayman~\cite{hayman:14} calls the problem of finding the smallest $m=G_0(n)$ for which a solution of \eqref{genFermat} exists as the Super-Fermat problem. A difference analogue of \eqref{genFermat} was studied by the third author~\cite{li:15}, who obtained similar bounds for a difference counterpart of \eqref{genFermat} under certain conditions on the value distribution of solutions.

The purpose of this paper is to introduce a difference counterpart of the radical, and to use it to prove a difference analogue of Mason's theorem, as well as a truncated version of the difference second main theorem for holomorphic curves. As applications we prove two results on the non-existence of polynomial solutions to difference Fermat type equations, and two non-existence results on difference Fermat type equations with transcendental entire solutions.

\section{Difference radical}

Let $p\not\equiv 0$ be a polynomial in $\C[z]$, and let $\kappa\in \C\setminus\{0\}$. We define the $\kappa$-{\it difference radical} $\mathrm{r\tilde{a}d}_{\kappa}(p)$ of $p$ as
$$
\mathrm{r\tilde{a}d}_{\kappa}\bigl(p(z)\bigr)=\prod_{w\in\mathbb{C}}(z-w)^{d_{\kappa}(w)},
$$
where
$$
d_{\kappa}(w)=\ord_w(p)-\min\{\ord_w(p), \ord_{w+\kappa}(p) \}
$$
with $\ord_w (p)\geq 0$ being the order of zero of the polynomial $p$ at $w\in \C$. This corresponds to the way to define the usual radical $\rad\, p$ as
$$
\rad\, p(z) = \prod_{w\in\mathbb{C}}(z-w)^{d(w)},
$$
where
$$
d(w)=\ord_w(p)-\min \{ \ord_w(p), \ord_w(p') \} \in \{0,1\}.
$$
Now, by defining $\tilde{n}_{\kappa}(p)=\deg\mathrm{r\tilde{a}d}_{\kappa}(p)$, it follows that $\tilde n_\kappa(p)$ is the number of zeros of $p$ appearing non-periodically with respect to the constant $\kappa$, where the multiplicities of the zeros are taken into account. In other words,
    \begin{equation}\label{tilden}
    \tilde n_\kappa(p) = \sum_{w\in\C} \left(\ord_w (p) - \min\{\ord_{w} (p),\ord_{w+\kappa}(p)\}\right).
    \end{equation}
For example, if $p$ has zeros of order $2$, $1$ and $3$ at $z_0$, $z_0+1$ and $z_0+2$, respectively, and no zero at $z_0+3$, then the zero of $p$ at $z_0$ is counted once in $\tilde n_1(p)$ and the zero at $z_0+2$ three times in $\tilde n_1(p)$, while the zero at $z_0+1$ is not counted in $\tilde n_1(p)$.

In addition, we define $\Delta_\kappa p = p(z+\kappa)-p(z)$, and use the notation $\gcd(p,q)$ to denote the greatest common divisor of $p$ and $q$ over $\C[z]$. 

\begin{lemma}\label{difference_lemma}
Let $p\not\equiv0$ be a polynomial in $\C[z]$. Then,
    \begin{equation*} \label{representation}
    p=\gcd(p, \Delta_{\kappa}p) \cdot \mathrm{r\tilde{a}d}_{\kappa}(p)
    \end{equation*}
and therefore
    \begin{equation*}
    \deg p  = \deg \gcd(p,\Delta_\kappa p) + \tilde n_\kappa(p).
    \end{equation*}
\end{lemma}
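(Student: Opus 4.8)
The plan is to prove the factorization pointwise, by showing that the two sides have the same order of vanishing at every $w\in\C$, and then to read off the degree formula. Write $g=\gcd(p,\Delta_\kappa p)$. If $p$ is a nonzero constant the claim is trivial ($g$ and $\mathrm{r\tilde{a}d}_\kappa(p)$ are both constant and the product of their degrees with $\deg p$ all vanish), so I would assume $\deg p\geq 1$; then, since the leading term of $\Delta_\kappa p$ equals $(\deg p)\,\kappa$ times the leading coefficient of $p$ times $z^{\deg p-1}$, we have $\Delta_\kappa p\not\equiv 0$ and $g$ is a well-defined polynomial dividing $p$. Using the standard facts that for nonzero polynomials $\ord_w$ is additive on products and $\ord_w(\gcd(q_1,q_2))=\min\{\ord_w(q_1),\ord_w(q_2)\}$, it suffices to check that
$$
\ord_w(p)-\ord_w(g)=d_\kappa(w)\qquad\text{for every }w\in\C,
$$
because then $p/g$ and $\mathrm{r\tilde{a}d}_\kappa(p)$ have the same divisor, hence agree up to the constant factor $\mathrm{lc}(p)$, which is absorbed by the normalization of the gcd (the identity being understood up to a nonzero constant in any case).

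The one elementary identity I would record first is $\ord_w\bigl(p(z+\kappa)\bigr)=\ord_{w+\kappa}(p)$, valid for all $w$. Now fix $w$ and split into two cases according to the comparison of $\ord_w(p)$ and $\ord_{w+\kappa}(p)$. If $\ord_w(p)\leq \ord_{w+\kappa}(p)$, then both $p(z)$ and $p(z+\kappa)$ vanish at $w$ to order at least $\ord_w(p)$, so $\ord_w(\Delta_\kappa p)\geq \ord_w(p)$ and hence $\ord_w(g)=\ord_w(p)$; meanwhile $d_\kappa(w)=\ord_w(p)-\ord_w(p)=0$, so the displayed identity holds. If $\ord_w(p)>\ord_{w+\kappa}(p)$, then $p(z+\kappa)$ vanishes at $w$ strictly to lower order than $p(z)$ does, so there is no cancellation and $\ord_w(\Delta_\kappa p)=\ord_{w+\kappa}(p)$ exactly; hence $\ord_w(g)=\min\{\ord_w(p),\ord_{w+\kappa}(p)\}=\ord_{w+\kappa}(p)$, while $d_\kappa(w)=\ord_w(p)-\ord_{w+\kappa}(p)$, and the identity holds again. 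This gives $p=g\cdot\mathrm{r\tilde{a}d}_\kappa(p)$, and taking degrees yields $\deg p=\deg g+\deg\mathrm{r\tilde{a}d}_\kappa(p)=\deg \gcd(p,\Delta_\kappa p)+\tilde n_\kappa(p)$.

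There is no deep obstacle here; the only point needing care is the order of $\Delta_\kappa p$ at $w$. When $\ord_w(p)$ and $\ord_{w+\kappa}(p)$ differ, the order of the difference is the minimum of the two with no cancellation, which is exactly what the argument needs; when they coincide there can be extra cancellation in $\Delta_\kappa p$, but that is precisely the case in which $d_\kappa(w)=0$ and $\ord_w(g)=\ord_w(p)$ hold regardless, so the imprecision is harmless. A secondary bookkeeping issue is the normalization of the gcd and leading coefficients, which affects only a nonzero constant multiple and hence does not touch the degree count.
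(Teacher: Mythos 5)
Your proof is correct, but it takes a genuinely different route from the paper's. You verify the factorization locally: at each $w\in\C$ you compute $\ord_w\bigl(\gcd(p,\Delta_\kappa p)\bigr)$ by a two-case comparison of $\ord_w(p)$ with $\ord_{w+\kappa}(p)$, using that the order of $\Delta_\kappa p=p(z+\kappa)-p(z)$ at $w$ equals the smaller of the two orders when they differ and is at least their common value when they agree; this shows that $p/\gcd(p,\Delta_\kappa p)$ and $\mathrm{r\tilde{a}d}_{\kappa}(p)$ have the same divisor. The paper argues globally instead: it writes $p$ as a product of maximal $\kappa$-arithmetic chains of linear factors $\prod_{j=0}^{l_i}(z-\beta_i+j\kappa)$ (with roots repeated according to multiplicity and with the chains arranged so that no two concatenate), reads off $\mathrm{r\tilde{a}d}_{\kappa}(p)=\prod_i(z-\beta_i)$ as the product of chain endpoints, and then deduces the factorization from the identity $\gcd\bigl(p(z),p(z+\kappa)\bigr)=\gcd\bigl(p(z),\Delta_\kappa p(z)\bigr)$. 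Your local argument buys you freedom from the combinatorial bookkeeping of the chain decomposition (the multiplicity convention for the $\beta_i$ and the non-concatenation condition $\beta_s\ne\beta_t-(l_t+1)\kappa$), at the cost of the extra observation about the order of vanishing of a difference of two polynomials, which you state and use correctly; the paper's version makes the shape of the difference radical more transparent. Both arguments share the same harmless normalization caveat, which you flag explicitly: the gcd is defined only up to a unit, so the displayed identity is an equality up to a nonzero constant, and the degree formula is unaffected.
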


\begin{proof}
We may write $p$ in the form
    \begin{equation}\label{product_form}
    p(z)= \gamma\prod_{i=1}^m \prod_{j=0}^{l_i} (z-\beta_i+j\kappa),
    \end{equation}
where $\gamma\in \C$ and $l_i\in \N\cup\{0\}$. Note that the roots of $p$ are repeated in \eqref{product_form} the number of times according to their multiplicity, so the case $\beta_i=\beta_k$, $i\not=k$, is allowed. More precisely, for a zero of $p(z)$, if $\ord_\beta (p)>\ord_{\beta+\kappa}(p)$, then $\beta$ is entered $\ord_\beta (p)-\ord_{\beta+\kappa}(p)$ times as one of the `$\{\beta_i\}$' in \eqref{product_form}.  If $\ord_\beta (p)\leq\ord_{\beta+\kappa}(p)$, then $\beta$ is not entered as one of the `$\{\beta_i\}$' in \eqref{product_form}. Moreover, we may assume in \eqref{product_form} that $\beta_s\ne\beta_t-(l_{t}+1)\kappa$ for any $s$, $t=1, 2, \dots, m$, since otherwise, we can combine two products as
\begin{equation*}
\prod_{j=0}^{l_t}(z-\beta_t+j\kappa)\cdot \prod_{j=0}^{l_s}(z-\beta_s+j\kappa)=\prod_{j=0}^{l_t+l_s+1}(z-\beta_t+j\kappa).
\end{equation*}
Now, by \eqref{product_form}, the difference radical satisfies the simple representation
$$
\mathrm{r\tilde{a}d}_{\kappa}(p(z))=\prod_{i=1}^{m} (z-\beta_i).
$$
In fact, for each $i\in\{1, \ldots , m\}$ we have
\[
d_{\kappa}(\beta_i-j\kappa) = \left\{
\begin{array}{ll}
1-\min(1,1)=0 & (1\leq j \leq \ell_i), \\
1-\min(1,0)=1 & (j=0).
\end{array}
\right.
\]
From \eqref{product_form}, we have
    \begin{equation*}\label{product_form2}
    p(z+\kappa)= \gamma\prod_{i=1}^m \prod_{j=0}^{l_i} (z-\beta_i+(j+1)\kappa),
    \end{equation*}
and so
    \begin{equation*}
    p(z) =   \gcd \left(p(z), p(z+\kappa)\right) \cdot \mathrm{r\tilde{a}d}_{\kappa}\bigl(p(z)\bigr).
    \end{equation*}
Since
\begin{equation*}
 \gcd \left(p(z), p(z+\kappa)\right)=\gcd \left(p(z), p(z+\kappa)-p(z)\right)
 =\gcd\left(p(z),\Delta_\kappa p(z)\right),
\end{equation*}
it follows that
\begin{equation*}
  p(z) =   \gcd\left(p(z),\Delta_\kappa p(z)\right)\cdot \mathrm{r\tilde{a}d}_{\kappa}\bigl(p(z)\bigr).
\end{equation*}
Therefore,
\begin{equation*}
  \deg p  = \deg \gcd(p,\Delta_\kappa p) + \deg\mathrm{r\tilde{a}d}_{\kappa}(p) = \deg \gcd(p,\Delta_\kappa p) + \tilde n_\kappa(p).
\end{equation*}
\end{proof}

In what follows we denote $\bar{n}(p)=\deg\mathrm{rad}\, p$ for the number of all the distinct roots of $p(z)$. Then, as Laeng~\cite{laeng:99} observed, we obtain the following properties for~$\bar{n}$:
\begin{enumerate}
\item $\bar{n}(p)\leq \deg p$ for any $p(z)\in\mathbb{C}[z]$;
\item $\bar{n}(p^m)= \bar{n}(p)$ for any $p(z)\in\mathbb{C}[z]$ and $m\in\mathbb{N}$;
\item $\bar{n}(pq)\leq \bar{n}(p)+\bar{n}(q)$ for any $p(z), q(z)\in\mathbb{C}[z]$, where the equality holds exactly when $p(z)$ and $q(z)$ are relatively prime.
\end{enumerate}

Of the $\kappa$-difference analogue $\tilde{n}_{\kappa}$, those properties change slightly but significantly. In fact, our definition \eqref{tilden} of $\tilde{n}_{\kappa}(p)$
implies:
\begin{enumerate}
\item[($\tilde1$)] $\tilde{n}_{\kappa}(p)\leq \deg p$ for any $p(z)\in\mathbb{C}[z]$;
\item[($\tilde2$)] $\tilde{n}_{\kappa}(p^m)= m\cdot\tilde{n}_{\kappa}(p)$ for any $p(z)\in\mathbb{C}[z]$ and $m\in\mathbb{N}$;
\item[($\tilde3$)] $\tilde{n}_{\kappa}(pq)\leq \tilde{n}_{\kappa}(p)+\tilde{n}_{\kappa}(q)$ for any $p(z), q(z)\in\mathbb{C}[z]$, where the equality holds exactly when both $\mathrm{r\tilde{a}d}_{-\kappa}(p(z+\kappa))$ and $\mathrm{r\tilde{a}d}_{\kappa}(q(z))$, as well as $\mathrm{r\tilde{a}d}_{\kappa}(p(z))$ and $\mathrm{r\tilde{a}d}_{-\kappa}(q(z+\kappa))$ are relatively prime.
\end{enumerate}

%

%

\section{Difference Analogue of Mason's theorem}

The following theorem is a difference analogue of Mason's theorem, or in other words, a difference $abc$ theorem for polynomials.

\begin{theorem}\label{differenceMason}
Let $a$, $b$ and $c$ be relatively prime polynomials in $\C[z]$ such that
    \begin{equation}\label{abc}
    a+b=c
    \end{equation}
and such that $a$, $b$ and $c$ are not all constant. Then, 
    \begin{equation*}
    \max\{\deg a,\deg b,\deg c\} \leq \tilde n_\kappa(a) + \tilde n_\kappa(b) + \tilde n_\kappa(c) - 1,
    \end{equation*}
where $\kappa\in\C\setminus\{0\}$.
\end{theorem}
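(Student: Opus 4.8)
The plan is to adapt the classical Wronskian proof of Mason's theorem, with the Wronskian replaced by the Casorati determinant. Define
\[
W(z)=a(z)b(z+\kappa)-a(z+\kappa)b(z),
\]
and observe that adding and subtracting $a(z)b(z)$ gives $W=a\,\Delta_\kappa b-b\,\Delta_\kappa a$. Using \eqref{abc} one obtains the two further identities $W=a\,\Delta_\kappa c-c\,\Delta_\kappa a$ and $W=c\,\Delta_\kappa b-b\,\Delta_\kappa c$, so that $W$ is, up to sign, the Casoratian of any two of $a$, $b$, $c$. First I would check that $W\not\equiv0$: if $W\equiv0$ then the rational function $a/b$ satisfies $(a/b)(z+\kappa)=(a/b)(z)$, hence is $\kappa$-periodic and therefore constant, so $a=\lambda b$ for some $\lambda\in\C$; relative primality then forces $a$ and $b$, and hence $c$, to be constant, contrary to hypothesis. (The cases in which one of $a$, $b$, $c$ is identically zero are excluded in the same way.) A comparison of leading coefficients shows that the degree-$(\deg a+\deg b)$ terms of $a(z)b(z+\kappa)$ and $a(z+\kappa)b(z)$ cancel, so $\deg W\le\deg a+\deg b-1$.

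The decisive step is the divisibility claim that each of $\gcd(a,\Delta_\kappa a)$, $\gcd(b,\Delta_\kappa b)$ and $\gcd(c,\Delta_\kappa c)$ divides $W$. This follows by inspecting the identities for $W$ above: from $W=a\,\Delta_\kappa b-b\,\Delta_\kappa a$ the first two divide $W$, and from $W=a\,\Delta_\kappa c-c\,\Delta_\kappa a$ the third does. Since $a$, $b$, $c$ are pairwise relatively prime, so are these three polynomials, and therefore their product divides $W$. Combining this with $\deg W\le\deg a+\deg b-1$ and with the identity $\deg\gcd(p,\Delta_\kappa p)=\deg p-\tilde n_\kappa(p)$ from Lemma~\ref{difference_lemma} (applied to $p=a,b,c$) turns the degree estimate into
\[
\bigl(\deg a-\tilde n_\kappa(a)\bigr)+\bigl(\deg b-\tilde n_\kappa(b)\bigr)+\bigl(\deg c-\tilde n_\kappa(c)\bigr)\le\deg a+\deg b-1,
\]
that is, $\deg c\le\tilde n_\kappa(a)+\tilde n_\kappa(b)+\tilde n_\kappa(c)-1$.

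Finally, rewriting \eqref{abc} as $c+(-b)=a$ and as $c+(-a)=b$, and using that $\tilde n_\kappa(-p)=\tilde n_\kappa(p)$ and that relative primality is preserved under these rewritings, the same bound holds for $\deg a$ and $\deg b$; taking the maximum gives the theorem. I expect the only genuinely delicate point to be the verification that $W\not\equiv0$ together with the bookkeeping of degenerate cases (some of $a$, $b$, $c$ constant or identically zero); the degree estimate for $W$ and the divisibility assertions are then immediate from the definition of $\Delta_\kappa$ and from Lemma~\ref{difference_lemma}.
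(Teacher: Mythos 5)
Your proposal is correct and follows essentially the same route as the paper: the same combination $a\Delta_\kappa b-b\Delta_\kappa a=a\Delta_\kappa c-c\Delta_\kappa a$, the same divisibility of this quantity by the three polynomials $\gcd(p,\Delta_\kappa p)$, the same degree bound $\deg a+\deg b-1$, and the same appeal to Lemma~\ref{difference_lemma}. The only cosmetic difference is in ruling out $W\equiv0$ (you use periodicity of $a/b$, the paper uses $a\mid\Delta_\kappa a$), and both arguments are fine.
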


\begin{proof}
Without loss of generality we may assume that $\max\{\deg a,\deg b,\deg c\}=\deg c$. From \eqref{abc} it follows that
    \begin{equation*}
    \Delta_\kappa a + \Delta_\kappa b = \Delta_\kappa c.
    \end{equation*}
Thus
    \begin{equation}\label{abc_eq1}
    a\Delta_\kappa a + a\Delta_\kappa b = a\Delta_\kappa c
    \end{equation}
and
    \begin{equation}\label{abc_eq2}
    a\Delta_\kappa a + b\Delta_\kappa a = c\Delta_\kappa a.
    \end{equation}
By subtracting \eqref{abc_eq2} from \eqref{abc_eq1}, we have
    \begin{equation*}\label{abc_eq3}
     a\Delta_\kappa b - b\Delta_\kappa a  = a\Delta_\kappa c -c\Delta_\kappa a,
    \end{equation*}
and so $\gcd(a,\Delta_\kappa a)$, $\gcd(b,\Delta_\kappa b)$ and $\gcd(c,\Delta_\kappa c)$ are all factors of $a\Delta_\kappa b - b\Delta_\kappa a$. Since $a$, $b$ and $c$ are relatively prime, it follows that also $\gcd(a,\Delta_\kappa a)$, $\gcd(b,\Delta_\kappa b)$ and $\gcd(c,\Delta_\kappa c)$ are relatively prime. Therefore,
    \begin{equation*}
    \gcd(a,\Delta_\kappa a)\gcd(b,\Delta_\kappa b)\gcd(c,\Delta_\kappa c)
    \end{equation*}
is a factor of $a\Delta_\kappa b - b\Delta_\kappa a$, which implies that
    \begin{equation}\label{degineq}
    \deg\gcd(a,\Delta_\kappa a) + \deg\gcd(b,\Delta_\kappa b) + \deg\gcd(c,\Delta_\kappa c) \leq \deg a + \deg b - 1
    \end{equation}
provided that $a\Delta_\kappa b - b\Delta_\kappa a\not = 0$. But if
    \begin{equation}\label{at}
    a\Delta_\kappa b - b\Delta_\kappa a= 0,
    \end{equation}
then $a\Delta_\kappa b = b\Delta_\kappa a$, and so $a$ is a factor of $b\Delta_\kappa a$. Since $a$ and $b$ have no common factors, it follows that $a$ is a factor of $\Delta_\kappa a$. This is only possible if $\Delta_\kappa a=0$. Similarly, under the assumption \eqref{at} it follows that $\Delta_\kappa b=0$ and $\Delta_\kappa c=0$, which contradicts the assumption of the theorem. Hence, \eqref{at} cannot hold and \eqref{degineq} is valid. By adding $\deg c$ to both sides of \eqref{degineq} and reorganizing the terms, we have
    \begin{equation*}
    \begin{split}
    \deg c &\leq \deg a - \deg\gcd(a,\Delta_\kappa a) + \deg b - \deg\gcd(b,\Delta_\kappa b) \\
    &\quad + \deg c - \deg\gcd(c,\Delta_\kappa c) - 1.
    \end{split}
    \end{equation*}
The assertion follows by Lemma~\ref{difference_lemma}.
\end{proof}

\begin{example}\label{ex32}
We can see that the assertion of Theorem~\ref{differenceMason} is sharp by the example $a(z)=(z+\alpha)(z+\alpha+\kappa)$, $b(z)=-(z+\beta)(z+\beta+\kappa)$, and $c(z)=2(\alpha-\beta)(z+(\alpha+\beta+\kappa)/2)$, where $\alpha,\beta\in\C$ such that $\beta\not=\alpha\not=\beta\pm\kappa$. Namely, then $a$, $b$ and $c$ are relatively prime polynomials in $\C[z]$
such that $a+b=c$, and such that none of the differences $\Delta_{\kappa}a$,  $\Delta_{\kappa}b$ and $\Delta_{\kappa}c$ is identically zero. In addition, $\max \{\deg a,\deg b,\deg c\}=2$, $\widetilde{n}_{\kappa}(a)=1$,
$\widetilde{n}_{\kappa}(b)=1$, $\widetilde{n}_{\kappa}(c)=1$ and $\widetilde{n}_{\kappa}(a)+\widetilde{n}_{\kappa}(b)+\widetilde{n}_{\kappa}(c)-1=2$.
\end{example}

Example~\ref{ex32} shows that Theorem~\ref{differenceMason} is sharp when $\max\{\deg a, \deg b,\deg c\}=2$.
The following example demonstrates the sharpness of Theorem~\ref{differenceMason} for the case $\max\{\deg a, \deg b,\deg c\}=4$ and $\kappa=1$.

\begin{example}
Set $\displaystyle \nu=\frac{1+\sqrt{3}i}{2}$ noting that $\nu^3=-1$, and set $\alpha=1-\nu$, $\beta=\nu$. Define
\begin{align*}
a(z)&=A(z+\alpha)^2(z+\alpha+1)^2,\quad b(z)=-A(z+\beta)^2(z+\beta+1)^2,\\
c(z)&=z(z+1)(z+2)\quad \text{with $\displaystyle A=\frac{i}{4\sqrt{3}}$}.
\end{align*}
Then $a$, $b$ and $c$ satisfy $\eqref{abc}$, and $\max\{\deg a, \deg b,\deg c\}=4$,
$\tilde{n}_1(a)=2$, $\tilde{n}_1(b)=2$, $\tilde{n}_1(c)=1$.
\end{example}

The following result extends Theorem~\ref{differenceMason} for $m+1$ polynomials.

\begin{theorem}\label{differenceMason_m+1}
Let $a_1,\ldots,a_{m+1}$ be relatively prime polynomials in $\C[z]$ such that
    \begin{equation}\label{a1am}
    a_1+ \ldots + a_m=a_{m+1},
    \end{equation}
and such that $a_1,\ldots,a_{m}$ are linearly independent over $\C$. Then,
    \begin{equation}\label{diffMasonineq}
    \max_{1\leq i\leq m+1}\{\deg a_i\} \leq \sum_{i=1}^{m+1}\tilde{n}_{\kappa}^{[m-1]}(a_i)-\frac{1}{2}m(m-1),
    \end{equation}
where we denote
\begin{equation} \label{eqn:n^m}
\tilde{n}_{\kappa}^{[m-1]}(a_i)=\deg\mathrm{r\tilde{a}d}_{\kappa}^{[m-1]}(a_i)
=\sum_{w\in\C} \Bigl(\mathrm{ord}_{w} (a_i)-\min_{0\leq j\leq m-1}\bigl\{\mathrm{ord}_{w+j\kappa} (a_i)\bigr\}\Bigr)
\end{equation}
and $\kappa\in\C\setminus\{0\}$.
\end{theorem}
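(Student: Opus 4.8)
The plan is to mimic the proof of Theorem \ref{differenceMason}, but replacing the two-row difference operations with an $(m+1)$-row Wronskian-type construction built from the iterated difference operator $\Delta_\kappa$. Without loss of generality assume $\max_i \deg a_i = \deg a_{m+1}$. The first step is to form the "Casoratian" (discrete Wronskian)
\[
W = W_\kappa(a_1,\dots,a_m) = \det\bigl( \Delta_\kappa^{\,i-1} a_j \bigr)_{1\le i,j\le m},
\]
and, more importantly, the $m$ determinants $W^{(k)}$ obtained by deleting row... — actually the cleaner route is: since $a_1+\dots+a_m = a_{m+1}$, the $(m+1)$ functions $a_1,\dots,a_{m+1}$ satisfy a linear relation, so the $(m+1)\times(m+1)$ Casoratian $\det\bigl(\Delta_\kappa^{\,i-1} a_j\bigr)_{1\le i\le m+1,\ 1\le j\le m+1}$ vanishes. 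Expanding along the last column and using that each $a_j$ for $1\le j\le m$ equals the appropriate signed minor shows that, up to sign, all the $m+1$ maximal minors $W_k := W_\kappa(a_1,\dots,\widehat{a_k},\dots,a_{m+1})$ (omitting $a_k$) are equal in absolute value; call this common polynomial $W$. Linear independence of $a_1,\dots,a_m$ over $\C$ — together with the hypothesis that not all $\Delta_\kappa$-iterates degenerate, which must be extracted exactly as in the $m=2$ case via an argument showing $W\not\equiv 0$ — guarantees $W\not\equiv 0$.

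The second step is the divisibility bookkeeping. For each fixed $i$, I claim that the polynomial $g_i := \gcd\bigl(a_i, \Delta_\kappa a_i, \Delta_\kappa^2 a_i, \dots, \Delta_\kappa^{m-1} a_i\bigr)$ divides $W$ (the common value of the maximal minors). Indeed, in the minor $W_k$ with $k\ne i$, the column corresponding to $a_i$ has entries $a_i, \Delta_\kappa a_i, \dots, \Delta_\kappa^{m-1} a_i$, so $g_i$ divides that whole column and hence divides $W_k$. Moreover $g_i$ here is exactly the complementary factor to $\mathrm{r\tilde{a}d}^{[m-1]}_\kappa(a_i)$: one shows, as an extension of Lemma \ref{difference_lemma}, that
\[
\deg a_i = \deg g_i + \tilde n^{[m-1]}_\kappa(a_i),
\]
since a zero $w$ of $a_i$ of order $\mu$ with $\Delta_\kappa^{\,j}$-orders controlled by $\min_{0\le j\le m-1}\mathrm{ord}_{w+j\kappa}(a_i)$ contributes precisely that minimum to $g_i$. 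Because $a_1,\dots,a_{m+1}$ are pairwise coprime, the $g_i$ are pairwise coprime, so $\prod_{i=1}^{m+1} g_i \mid W$, giving
\[
\sum_{i=1}^{m+1}\deg g_i \le \deg W.
\]

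The third step is to bound $\deg W$ from above by $\sum_{i=1}^{m+1}\deg a_i - \deg a_{m+1} - \tfrac12 m(m-1)$, equivalently $\deg W \le \bigl(\sum_{i\ne k_0}\deg a_i\bigr) - \tfrac12 m(m-1)$ for the index $k_0$ with $\deg a_{k_0}$ maximal. This is the standard Wronskian/Casoratian degree estimate: each row of the $m\times m$ determinant $W_{k_0}$, namely the row $\Delta_\kappa^{\,i-1}(\cdot)$, drops the degree by at least $i-1$ because $\Delta_\kappa$ lowers the degree of a polynomial by exactly one (here is where $\kappa\ne 0$ is used), so $\deg W_{k_0} \le \sum_{j\ne k_0}(\deg a_j - 0) - (0+1+\dots+(m-1)) = \sum_{j\ne k_0}\deg a_j - \tfrac12 m(m-1)$. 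Combining with the previous display and $\deg a_i = \deg g_i + \tilde n^{[m-1]}_\kappa(a_i)$ yields
\[
\sum_{i=1}^{m+1}\bigl(\deg a_i - \tilde n^{[m-1]}_\kappa(a_i)\bigr) \le \sum_{j\ne k_0}\deg a_j - \tfrac12 m(m-1),
\]
and cancelling $\sum_{j\ne k_0}\deg a_j$ from both sides leaves $\deg a_{k_0} \le \sum_{i=1}^{m+1}\tilde n^{[m-1]}_\kappa(a_i) - \tfrac12 m(m-1)$, which is \eqref{diffMasonineq}.

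I expect the main obstacle to be the nonvanishing $W\not\equiv 0$ and, relatedly, the precise translation of the hypothesis "$a_1,\dots,a_m$ linearly independent over $\C$" into the statement that the Casoratian does not vanish identically: over $\C[z]$ this is the Casorati criterion for linear independence of solutions of difference equations, but one must be careful that $\Delta_\kappa$-iterates of a polynomial can collide in degree in ways the differential case does not, and that the degenerate sub-cases (some $\Delta_\kappa^j a_i\equiv 0$) are handled — mirroring the role of the clause "not all constant / not all $\Delta_\kappa a\equiv 0$" in Theorem \ref{differenceMason}. The degree-drop bookkeeping in step three is routine once one records that $\deg \Delta_\kappa^{\,j} p = \deg p - j$ whenever $\deg p \ge j$ (and the term simply vanishes otherwise, only helping the inequality).
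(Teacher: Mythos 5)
Your proposal follows essentially the same route as the paper's own proof: form the Casoratian of $a_1,\dots,a_m$ (the paper's $\mathcal{C}_\kappa$), use the linear relation to see that omitting any one of the $m+1$ polynomials gives the same determinant up to sign, show that $\prod_{i=1}^{m+1}\gcd\bigl(a_i(z),a_i(z+\kappa),\dots,a_i(z+(m-1)\kappa)\bigr)$ divides it, identify $\deg$ of each gcd with $\deg a_i-\tilde n^{[m-1]}_\kappa(a_i)$, and bound $\deg\mathcal{C}_\kappa$ above by $\sum_{j\ne k_0}\deg a_j-\tfrac12 m(m-1)$ via the degree drop of $\Delta_\kappa$. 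The nonvanishing $\mathcal{C}_\kappa\not\equiv0$ that you flag as the main obstacle is asserted without further argument in the paper as well, so your treatment is at the same level of detail as the original.
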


\begin{proof}
Now we consider the Casoratian ${\mathcal C}_{\kappa}(z)\not\equiv0$ of $a_1(z), \ldots , a_m(z)$.
Let $z_{0}$ be a zero of some $a_i(z)$ with $1\leq i \leq m+1$.
Then ${\mathcal C}_{\kappa}(z)$ has also a zero at $z=z_{0}$ of multiplicity not smaller than
$$
\min_{0\leq j\leq m-1} \{ \ord_{z_{0}} (a_i(z+j\kappa)) \}.
$$
Therefore, under the assumption of the relative primeness we see that
$$
q(z):=\prod_{i=1}^{m+1} \gcd\Bigl(a_i(z), a_i(z+\kappa), \ldots , a_i\bigl(z+(m-1)\kappa\bigr) \Bigr)
$$
divides ${\mathcal C}_{\kappa}(z)$, so that there exists a polynomial $p(z)\in \mathbb{C}[z]$ satisfying
${\mathcal C}_{\kappa}(z)=p(z)q(z)$.
Note that the degree of $q(z)$ is not less than
$$
\sum_{i=1}^{m+1} \sum_{w\in\mathbb{C}} \min_{0\leq j\leq m-1} \bigl\{ \ord_w (a_i(z+j\kappa)) \bigr\} =
\sum_{i=1}^{m+1}\Bigl[\sum_{w\in\mathbb{C}}\ord_w(a_i)-\tilde{n}_{\kappa}^{[m-1]}(a_i) \Bigr]
$$
by means of the notation $(\ref{eqn:n^m})$.

On the other hand, the degree of ${\mathcal C}_{\kappa}(z)$ is never beyond any sum of distinct $m$ of the $\deg a_i(z)$ $(1\leq i\leq m+1)$ minus $\sum_{\ell=0}^{m-1}\ell=m(m-1)/2$ as the sum of $\deg\bigl(\Delta_{\kappa}^{\ell}a_{i_{\nu}}\bigr)$ for the mutually distinct $m$ integers $i_{\nu}\in\{1, \ldots , m, m+1\}$.
Hence we obtain
\begin{eqnarray*}
\min_{1\leq k\leq m+1} \sum_{1\leq i\leq m+1, i\neq k} \deg a_i -\frac{1}{2}m(m-1)
&\geq&
\sum_{i=1}^{m+1}\Bigl[\sum_{w\in\mathbb{C}}\ord_w(a_i)-\tilde{n}_{\kappa}^{[m-1]}(a_i) \Bigr]\\
&=&
\sum_{i=1}^{m+1}\deg a_i -\sum_{i=1}^{m+1} \tilde{n}_{\kappa}^{[m-1]}(a_i).
 \end{eqnarray*}
This implies our desired estimate
$$
 \max_{1\leq i\leq m+1}\{\deg a_i\} \leq \sum_{i=1}^{m+1}\tilde{n}_{\kappa}^{[m-1]}(a_i)-\frac{1}{2}m(m-1).
$$
\end{proof}

Note that $\tilde{n}_{\kappa}^{[m-1]}(a_i)$ in the above estimate cannot be replaced by $\tilde{n}_{\kappa}(a_i)=\deg\mathrm{r\tilde{a}d}_{\kappa}(a_i)$ to obtain
\begin{equation} \label{eqn:not}
 \max_{1\leq i\leq m+1}\{\deg a_i\} \leq \sum_{i=1}^{m+1}\deg\mathrm{r\tilde{a}d}_{\kappa}(a_i)-\frac{1}{2}m(m-1).
\end{equation}
By definition
$$
\tilde{n}_{\kappa}^{[m-1]}(a_i)-\tilde{n}_{\kappa}(a_i)
=\sum_{w\in\mathbb{C}}\Bigl[\min\{\ord_w (a_i), \ord_{w+\kappa} (a_i)\bigr\}-\min_{0\leq j \leq m-1}\bigl\{\ord_{w+j\kappa}(a_i)\Bigr]
$$
is always non-negative so that $\tilde{n}_{\kappa}^{[m-1]}(a_i)\geq \tilde{n}_{\kappa}(a_i)$.
Example~\ref{example_crude} below shows that $(\ref{eqn:not})$ does not hold in general when $m>2$.
If one wished to use the radicals $\mathrm{r\tilde{a}d}_{\kappa}(a_i)$ in \eqref{diffMasonineq}, it is possible to use such estimates as
\begin{eqnarray*}
\tilde{n}_{\kappa}^{[m-1]}(a_i)&=& \sum_{w\in\C}\Bigl\{\ord_w(a_i)-\min\bigl\{\ord_w(a_i), \ord_{w+\kappa}(a_i), \ldots , \ord_{w+(m-1)\kappa}(a_i)\bigr\}\Bigr\}\\
&=& \sum_{w\in\C}\max\bigl\{0, \ord_w(a_i)-\ord_{w+\kappa}(a_i), \ldots , \ord_w(a_i)-\ord_{w+(m-1)\kappa}(a_i)\bigr\}\\
&\leq & \sum_{w\in\C}\max\bigl\{0, \ord_w(a_i)-\ord_{w+\kappa}(a_i)\bigr\}+\cdots \\
& & \qquad +\sum_{w\in\C}\max\bigl\{0, \ord_w(a_i)-\ord_{w+(m-1)\kappa}(a_i)\bigr\}\\
&=& \sum_{w\in\C}\Bigl\{\ord_w(a_i)- \min\bigl\{\ord_w(a_i), \ord_{w+\kappa}(a_i)\bigr\}\Bigr\}+\cdots \\
& & \qquad +\sum_{w\in\C}\Bigl\{\ord_w(a_i) - \min\bigl\{\ord_w(a_i), \ord_{w+(m-1)\kappa}(a_i)\bigr\}\Bigr\}\\
&=&\tilde{n}_{\kappa}(a_i)+\cdots +\tilde{n}_{(m-1)\kappa}(a_i)=\sum_{j=1}^{m-1}\tilde{n}_{j\kappa}(a_i),
\end{eqnarray*}
which can be sharp when $m=2$ but the following example shows this is a crude estimate for our purposes.

\begin{example}\label{example_crude}
Given $c\in\mathbb{C}\setminus\{0\}$, we have the identity
$$
(z^2-c^2)\bigl\{(z+1)^2-c^2\bigr\}+(z^2+c^2)\bigl\{(z+1)^2+c^2\bigr\}=2z^2(z+1)^2+2c^4.
$$
Thus we have a solution $(a_1, a_2, a_3, a_4)$ to the equation
$$
a_1+a_2+a_3=a_4
$$
with $m=3$, such that $\{a_1, a_2, a_3\}$ is a linear independent system of relatively prime polynomials by requiring $c\neq 0, \pm 1, \pm \sqrt{-1}$.
In fact, we put
$$
p_1(z)=z^2-c^2, \quad p_2(z)=z^2+c^2, \quad p_3(z)=\sqrt{-2}\,z^2
$$
and
$$
a_1(z)=p_1(z)p_1(z+1), \quad a_2(z)=p_2(z)p_2(z+1), \quad a_3(z)=p_3(z)p_3(z+1),
$$
and $a_4(z)=2c^4$.
Then we have $\max_{1\leq i\leq 4} \{ \deg a_i \}=4$ and
$$
\sum_{i=1}^3 \deg\mathrm{r\tilde{a}d}_{1}(a_i)=2+2+2=6
$$
by observing the zeros of $a_i$ $(i=1,2,3)$, respectively.
Hence it is not possible for us to replace $\sum_{i=1}^{m+1}\tilde{n}_{\kappa}^{[m-1]}(a_i)$ in the above estimate by
$\sum_{i=1}^{m+1}\tilde{n}_{\kappa}(a_i)$, since $\frac{1}{2}m(m-1)=3$.
On the other hand, this example gives
$$
\sum_{i=1}^{4}\tilde{n}_{1}^{[2]}(a_i)=4+4+4=12,
$$
so that this is far from an example to confirm whether our estimate is sharp, unfortunately.
For this purpose, one needs to consider such an example that the $a_i(z)$ are of the form $a_i(z)=p_i(z)p_i(z+\kappa)\cdots p_i\bigl(z+(n-1)\kappa\bigr)$ for $n\geq m$ so that  $\min_{0\leq j \leq m-1}\bigl\{\ord_{w+j\kappa}(a_i)\bigr\}$ is positive at a zero of $a_i(z)$.
Note that this quantity is always zero when $n<m$ and the zeros of $p_i(z)$ appear non-periodically with respect to $\kappa$.
\end{example}

The following example observes the acuity of Theorem~\ref{differenceMason_m+1} in the case when $m=3$ with $\kappa=1$.

\begin{example}
Define
\begin{align*}
a_1(z)&=A(z+\alpha)(z+\alpha+1)(z+\alpha+2)^2,\\
a_2(z)&=B(z+\beta)(z+\beta+1)(z+\beta+2)^2,\\
a_3(z)&=-(A+B)z(z+1)(z+2)(z+3).
\end{align*}
By simple computations, we see that $a_1(z)+a_2(z)+a_3(z)$ reduces to a polynomial, say $a_4(z)$, of degree at most $1$ when $\alpha\ne-1/4$ and
\begin{equation}
\displaystyle\beta=\frac{2\alpha+1}{8\alpha-2},\quad B=-\frac{(4\alpha-1)^2\label{001}
}{3}A.
\end{equation}
Indeed, we have with \eqref{001}
\begin{align*}
a_1(z)+a_2(z)+a_3(z)&=\frac{A (8\alpha^2-4\alpha -1)\left(32 \alpha ^3-8 \alpha ^2+4 \alpha-1 \right)}{4 (1-4 \alpha )^2}z\\
&\quad +\frac{A (8\alpha^2-4\alpha -1) \left(32 \alpha ^4+160 \alpha ^3-8 \alpha ^2+8 \alpha-3\right)}{16 (1-4 \alpha )^2}.
\end{align*}
We can choose $\alpha$ so that $a_j(z)$, $j=1, 2, 3, 4$ are relatively prime.
Then $a_j$ satisfy \eqref{a1am}, and $\displaystyle\max_{1\leq j\leq4}\{\deg a_j\}=4$,
$\tilde{n}_1^{[2]}(a_1)=3$, $\tilde{n}_1^{[2]}(a_2)=3$, $\tilde{n}_1^{[2]}(a_3)=2$.
If $a_4(z)$ is non-constant, then $\tilde{n}_1^{[2]}(a_4)=1$ which gives $4\leq 6$, which is not enough to show the sharpness of Theorem~\ref{differenceMason_m+1} for the case $m=3$.
Next, we set $\displaystyle\alpha=\frac{i}{2\sqrt{2}}$ and $\displaystyle A=2-\sqrt{2}i$.
Then $\displaystyle\beta=-\frac{i}{2\sqrt{2}}$, $\displaystyle B=2+\sqrt{2}i$, and $a_4(z)$ reduces to a constant $\displaystyle -\frac{9}{16}$,
which gives a somewhat sharper estimate $4\leq 5$ for  Theorem~\ref{differenceMason_m+1} when $a_{m+1}$ is a constant.
\end{example}

\section{Polynomial solutions of Fermat type difference equations}\label{polyFermat_sec}

Factorial polynomial is defined as
    \begin{equation*}
    t^{\overline{n}} = t(t+1)\cdots (t+n-1).
    \end{equation*}
We extend this notation for the factorial of a polynomial $p$ in $\C[z]$ as 
    \begin{equation*}
    [p]_\kappa^{\overline{n}}=p(z)p(z+\kappa)\cdots p(z+(n-1)\kappa),
    \end{equation*}
where the shift $\kappa\in \C\setminus\{0\}$.

As a consequence of Theorem~\ref{differenceMason} we obtain the following result on the non-existence of polynomial solutions to a difference Fermat equation.

\begin{theorem}\label{diffFermatthm}
Let $\kappa\in\mathbb{C}\setminus\{0\}$, $n\in\mathbb{N}$ and $a, b, c\in \mathbb{C}[z]$, not all constant. If $[a]_{\kappa}^{\bar{n}}$, $[b]_{\kappa}^{\bar{n}}$ and $[c]_{\kappa}^{\bar{n}}$ are relatively prime and satisfy
\begin{equation}\label{xyz}
[a]_{\kappa}^{\bar{n}}+[b]_{\kappa}^{\bar{n}}=[c]_{\kappa}^{\bar{n}},
\end{equation}
then $n\leq 2$. If at least one of  $a$, $b$ and $c$ is constant, then $n=1$.
\end{theorem}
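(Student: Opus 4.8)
The plan is to apply Theorem~\ref{differenceMason} to the triple $A=[a]_\kappa^{\bar n}$, $B=[b]_\kappa^{\bar n}$, $C=[c]_\kappa^{\bar n}$, which by hypothesis are relatively prime and satisfy $A+B=C$. First I would check that $A$, $B$, $C$ are not all constant: since at least one of $a$, $b$, $c$ is non-constant, say $a$, then $[a]_\kappa^{\bar n}=a(z)a(z+\kappa)\cdots a(z+(n-1)\kappa)$ has degree $n\deg a\geq 1$, so the hypotheses of Theorem~\ref{differenceMason} are met. The conclusion gives
\begin{equation*}
\max\{\deg A,\deg B,\deg C\}\leq \tilde n_\kappa(A)+\tilde n_\kappa(B)+\tilde n_\kappa(C)-1.
\end{equation*}

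The key computation is then to bound each $\tilde n_\kappa([p]_\kappa^{\bar n})$ from above in terms of $\deg p$. The crucial observation is that for a zero $w$ of $[p]_\kappa^{\bar n}$ the order $\ord_w([p]_\kappa^{\bar n})=\sum_{j=0}^{n-1}\ord_{w-j\kappa}(p)$, and the telescoping nature of the factorial product means that most zeros appear periodically with shift $\kappa$, so they get cancelled in $\tilde n_\kappa$. Concretely, I expect to show $\tilde n_\kappa([p]_\kappa^{\bar n})\leq \deg p$ for every non-constant $p$: writing $p=\gamma\prod_k (z-w_k)^{m_k}$, the factorial $[p]_\kappa^{\bar n}$ is (up to possible overlaps) $\gamma^n\prod_k\prod_{j=0}^{n-1}(z-w_k+j\kappa)^{m_k}$, and in each block $\prod_{j=0}^{n-1}(z-w_k+j\kappa)^{m_k}$ only the single translate at $w_k-(n-1)\kappa$ (the ``bottom'' of the chain, the one whose $\kappa$-successor is still a zero of the same multiplicity or more) survives, contributing $m_k$; summing over $k$ gives $\sum_k m_k=\deg p$. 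Overlaps between different chains only decrease $\tilde n_\kappa$, by property~($\tilde 3$) and the merging argument in the proof of Lemma~\ref{difference_lemma}. So $\tilde n_\kappa([p]_\kappa^{\bar n})\leq\deg p$, and in fact $\tilde n_\kappa([p]_\kappa^{\bar n})\leq \deg p\leq n\deg p=\deg[p]_\kappa^{\bar n}$ with the first inequality the relevant one.

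Plugging these bounds into the displayed inequality, and using $\deg A=n\deg a$ etc., yields
\begin{equation*}
n\max\{\deg a,\deg b,\deg c\}=\max\{\deg A,\deg B,\deg C\}\leq \deg a+\deg b+\deg c-1\leq 3\max\{\deg a,\deg b,\deg c\}-1.
\end{equation*}
Setting $D=\max\{\deg a,\deg b,\deg c\}\geq 1$, this reads $nD\leq 3D-1$, hence $n\leq 3-\frac1D<3$, so $n\leq 2$. For the second assertion, if one of $a$, $b$, $c$ is constant, then only two of $A$, $B$, $C$ are non-constant; I would need to argue that a constant among $a,b,c$ forces the corresponding $\tilde n_\kappa$-term to drop to $0$ (a nonzero constant has no zeros) and the degree sum on the right to be at most $2D$, giving $nD\leq 2D-1$, hence $n\leq 2-\frac1D<2$, i.e. $n=1$. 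Here I should be slightly careful that relative primeness of the factorials still allows Theorem~\ref{differenceMason} to be applied when one term is a nonzero constant—which is fine, since a nonzero constant is relatively prime to everything and the ``not all constant'' hypothesis is still satisfied.

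The main obstacle I anticipate is the clean justification of $\tilde n_\kappa([p]_\kappa^{\bar n})\leq\deg p$ when the zeros of $p$ are themselves arranged in $\kappa$-periodic patterns, so that the chains $\{w_k, w_k+\kappa,\ldots,w_k+(n-1)\kappa\}$ for different $k$ interleave or coincide. The safe way to handle this is to invoke the structure from Lemma~\ref{difference_lemma}: write $[p]_\kappa^{\bar n}=\gcd([p]_\kappa^{\bar n},\Delta_\kappa[p]_\kappa^{\bar n})\cdot\mathrm{r\tilde ad}_\kappa([p]_\kappa^{\bar n})$ and bound $\deg\mathrm{r\tilde ad}_\kappa([p]_\kappa^{\bar n})=\deg[p]_\kappa^{\bar n}-\deg\gcd([p]_\kappa^{\bar n},\Delta_\kappa[p]_\kappa^{\bar n})$ by showing $[p]_\kappa^{\bar{n-1}}(z+\kappa)$ — i.e. $p(z+\kappa)p(z+2\kappa)\cdots p(z+(n-1)\kappa)$ — divides $\gcd([p]_\kappa^{\bar n}(z),[p]_\kappa^{\bar n}(z+\kappa))$, since $[p]_\kappa^{\bar n}(z)=p(z)\cdot[p]_\kappa^{\bar{n-1}}(z+\kappa)$ and $[p]_\kappa^{\bar n}(z+\kappa)=[p]_\kappa^{\bar{n-1}}(z+\kappa)\cdot p(z+n\kappa)$. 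This divisibility gives $\deg\gcd\geq (n-1)\deg p$, hence $\tilde n_\kappa([p]_\kappa^{\bar n})\leq n\deg p-(n-1)\deg p=\deg p$, which is exactly the bound needed and sidesteps any case analysis on interleaving chains.
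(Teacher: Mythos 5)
Your proposal is correct and follows essentially the same route as the paper: apply Theorem~\ref{differenceMason} to the factorials and use the bound $\tilde n_\kappa([p]_\kappa^{\bar n})\leq\deg p$, concluding by elementary degree arithmetic (the paper sums the three symmetric inequalities rather than working with the maximum, but this is immaterial). Your divisibility argument, showing that $p(z+\kappa)\cdots p(z+(n-1)\kappa)$ divides $\gcd\bigl([p]_\kappa^{\bar n}(z),[p]_\kappa^{\bar n}(z+\kappa)\bigr)$ and invoking Lemma~\ref{difference_lemma}, is a clean explicit justification of the key bound that the paper leaves implicit.
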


\begin{proof}
Suppose first that none of $a$, $b$ and $c$ is constant. If \eqref{xyz} holds, then by Theorem~\ref{differenceMason}, we have
    \begin{equation*}
    \begin{split}
    \deg [a]_\kappa^{\overline{n}} &\leq \max\{\deg [a]_\kappa^{\overline{n}},\deg [b]_\kappa^{\overline{n}},\deg [c]_\kappa^{\overline{n}}\} \\
    &\leq \tilde n_\kappa([a]_\kappa^{\overline{n}}) + \tilde n_\kappa([b]_\kappa^{\overline{n}}) + \tilde n_\kappa([c]_\kappa^{\overline{n}}) - 1 \\
    &\leq \deg a  + \deg b + \deg c - 1.
    \end{split}
    \end{equation*}
Since $\deg [a]_\kappa^{\overline{n}}=n\deg a$, it follows that
    \begin{equation}\label{xineq}
    n\deg a \leq \deg a + \deg b + \deg c - 1.
    \end{equation}
By repeating the same argument for $b$ and $c$ instead of $a$, we have
    \begin{equation}\label{yineq}
    n\deg b \leq \deg a + \deg b + \deg c - 1
    \end{equation}
and
    \begin{equation}\label{zineq}
    n\deg c \leq \deg a + \deg b + \deg c - 1.
    \end{equation}
By combining \eqref{xineq}, \eqref{yineq} and \eqref{zineq}, it follows that
     \begin{equation*}
     n(\deg a + \deg b + \deg c) \leq 3(\deg a + \deg b + \deg c) - 3,
     \end{equation*}
and so $n\leq 2$.

Assume now that at least one of $a$, $b$ and $c$ is constant. Then by \eqref{xyz} exactly one of them, say $c$, is constant. Then, by \eqref{xineq} and \eqref{yineq}, we have
     \begin{equation*}
     n(\deg a + \deg b) \leq 2(\deg a + \deg b) - 2,
     \end{equation*}
which implies that $n\leq 1$.
\end{proof}

The following example shows that the assertion of Theorem~\ref{diffFermatthm} is sharp.

\begin{example}\label{deg2example}
Choosing $\kappa=1$ and defining
    \begin{equation*}
    \begin{split}
    a(z) &= z^2,\\
    b(z) &= -\frac{i}{2}\left(\sqrt{2}z^2 + 2z -\sqrt{2}\right),\\
    c(z) &= -\frac{1}{2}\left(\sqrt{2}z^2 - 2z -\sqrt{2}\right),
    \end{split}
    \end{equation*}
it follows that $[a]_{1}^{\bar{2}}$, $[b]_{1}^{\bar{2}}$ and $[c]_{1}^{\bar{2}}$ have no common factors, and they satisfy \eqref{xyz} with $n=2$.
\end{example}

\noindent\textit{Remark. } The assertions of Lemma \ref{difference_lemma} and of Theorems \ref{differenceMason} and \ref{diffFermatthm} remain valid in a more general setting where the polynomials are in $k[z]$, where $k$ is any algebraically closed field.

\medskip

The following theorem extends Theorem~\ref{diffFermatthm} to equations with arbitrarily many terms.

\begin{theorem}\label{diffFermatthm2}
If $m\geq 2$ and $p_1, \ldots , p_{m+1}$ are non-constant polynomials in $\mathbb{C}[z]$ such that
$[p_1]_{\kappa}^{\bar{n}},
[p_2]_{\kappa}^{\bar{n}}, \ldots , [p_{m+1}]_{\kappa}^{\bar{n}}$
are relatively prime and satisfy
\begin{equation}\label{xyz2}
[p_1]_{\kappa}^{\bar{n}}+[p_2]_{\kappa}^{\bar{n}}+ \cdots +[p_{m}]_{\kappa}^{\bar{n}}= [p_{m+1}]_{\kappa}^{\bar{n}}
\end{equation}
for some $\kappa\in\mathbb{C}\setminus\{0\}$ and $n\in\mathbb{N}$, then
\begin{equation}\label{deg_upper_bound}
n\leq m^2-1-\frac{m(m-1)}{2\max_{1\leq i \leq m+1} \deg p_{i}}.
\end{equation}
\end{theorem}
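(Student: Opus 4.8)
The plan is to apply Theorem~\ref{differenceMason_m+1} to the polynomials $a_i = [p_i]_\kappa^{\bar n}$ for $i=1,\ldots,m+1$, and then translate the resulting inequality into a bound on $n$. First I would note that $a_1,\ldots,a_m$ are linearly independent over $\C$: indeed, each $a_i$ has degree exactly $n\deg p_i$, and since the $a_i$ are relatively prime, any linear relation among them would force a common factor or a degree collapse; more carefully, one shows directly that a nontrivial $\C$-linear combination cannot vanish because the $a_i$ have distinct zero sets (as they are coprime and non-constant). With linear independence in hand, Theorem~\ref{differenceMason_m+1} gives
\begin{equation*}
\max_{1\le i\le m+1}\{\deg a_i\} \le \sum_{i=1}^{m+1}\tilde n_\kappa^{[m-1]}(a_i) - \frac{1}{2}m(m-1).
\end{equation*}

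The key computational step is to bound $\tilde n_\kappa^{[m-1]}(a_i)$ from above in terms of $\deg p_i$. Since $a_i(z) = p_i(z)p_i(z+\kappa)\cdots p_i(z+(n-1)\kappa)$, a zero $w$ of $a_i$ contributes $\mathrm{ord}_w(a_i) - \min_{0\le j\le m-1}\mathrm{ord}_{w+j\kappa}(a_i)$ to $\tilde n_\kappa^{[m-1]}(a_i)$. The crude bound $\tilde n_\kappa^{[m-1]}(a_i)\le \deg a_i = n\deg p_i$ is available from property~($\tilde1$), but that only yields $n\le m^2-m-1$ type bounds without the refinement. The point of the factorial structure is that when $n\ge m$, every zero $w$ of $a_i$ that lies in a full $\kappa$-string of length at least $m$ coming from a single zero of $p_i$ has $\min_{0\le j\le m-1}\mathrm{ord}_{w+j\kappa}(a_i)\ge 1$; summing this saving over the zeros of $p_i$, counted with multiplicity, gives
\begin{equation*}
\tilde n_\kappa^{[m-1]}(a_i) \le n\deg p_i - (n-m+1)_+\deg p_i,
\end{equation*}
or more precisely a saving of at least $\deg p_i$ units whenever $n\ge m$ — the honest bound one needs is $\tilde n_\kappa^{[m-1]}(a_i)\le (m-1)\deg p_i$ when $n\ge m-1$, since in the factorial product each individual shifted zero of $p_i$ fails to be "periodically absorbed" for at most $m-1$ of the $n$ factors. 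I would establish this last inequality carefully by a direct count on a single $\kappa$-orbit of a zero of $p_i$.

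Granting $\tilde n_\kappa^{[m-1]}(a_i)\le (m-1)\deg p_i$ and writing $D = \max_i \deg p_i$, the Casoratian inequality becomes
\begin{equation*}
nD \le n\max_i\deg p_i \le \max_i\deg a_i \le (m-1)\sum_{i=1}^{m+1}\deg p_i - \frac{1}{2}m(m-1) \le (m-1)(m+1)D - \frac{1}{2}m(m-1),
\end{equation*}
and dividing by $D$ yields exactly $n\le m^2-1-\frac{m(m-1)}{2D}$, as claimed. The main obstacle I anticipate is getting the bound on $\tilde n_\kappa^{[m-1]}(a_i)$ exactly right: one must be careful about zeros of $p_i$ whose $\kappa$-orbits overlap (so that the string structure of $a_i$ is longer than $n$), about the boundary factors at the two ends of the factorial product where the $\min$ over $j\in\{0,\ldots,m-1\}$ genuinely drops to zero, and about the case $n<m$, where no absorption occurs and one must fall back on $\tilde n_\kappa^{[m-1]}(a_i)\le n\deg p_i$ — checking that \eqref{deg_upper_bound} still holds (indeed is then vacuous or follows a fortiori) in that regime. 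Once the per-term estimate is pinned down, the rest is the short arithmetic above.
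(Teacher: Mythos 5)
Your route is the paper's: apply Theorem~\ref{differenceMason_m+1} to $a_i=[p_i]_{\kappa}^{\bar{n}}$, prove the per-term estimate $\tilde{n}_{\kappa}^{[m-1]}([p_i]_{\kappa}^{\bar{n}})\leq (m-1)\deg p_i$ by an absorption count along $\kappa$-orbits (with the trivial fallback $\tilde{n}_{\kappa}^{[m-1]}([p_i]_{\kappa}^{\bar{n}})\leq n\deg p_i\leq(m-1)\deg p_i$ when $n<m$), and finish with the same arithmetic. That core is sound; the paper carries out the orbit count by writing $\ord_w([p_i]_{\kappa}^{\bar{n}})=\sum_{j=0}^{n-1}\ord_{w+j\kappa}(p_i)$ and noting that for $n\geq m$ every one of the $m$ competing sums in the minimum contains $\sum_{j=m-1}^{n-1}\ord_{w+j\kappa}(p_i)$, which settles cleanly the overlapping-orbit and boundary issues you flag as the anticipated difficulty.

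The genuine gap is your justification of the hypothesis of Theorem~\ref{differenceMason_m+1}, namely the linear independence of $a_1,\ldots,a_m$ over $\C$. Relative primeness --- even pairwise coprimality together with distinct zero sets --- does not imply linear independence: the polynomials $z-\alpha_1$, $z-\alpha_2$, $z-\alpha_3$ with distinct $\alpha_j$ are non-constant, pairwise coprime, of the form $[p]_{\kappa}^{\bar{1}}$, and linearly dependent, since the space of polynomials of degree at most one is two-dimensional. So the claim that ``a nontrivial $\C$-linear combination cannot vanish because the $a_i$ have distinct zero sets'' is false, and without independence the Casoratian underlying Theorem~\ref{differenceMason_m+1} may vanish identically, so the theorem simply does not apply. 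The paper's fix is a reduction: if $[p_1]_{\kappa}^{\bar{n}},\ldots,[p_m]_{\kappa}^{\bar{n}}$ are linearly dependent, eliminate terms to reach an equation of the same shape with $m'<m$ linearly independent terms on the left, apply the argument there, and observe that the resulting bound $n\leq (m')^2-1-\tfrac{m'(m'-1)}{2D'}$ with $D'\leq D$ is at most the right-hand side of \eqref{deg_upper_bound}. You need this reduction (or some other honest argument) in place of the asserted implication; everything else in your proposal matches the paper's proof.
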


\begin{proof}
We may assume, without loss of generality, that $[p_1]_{\kappa}^{\bar{n}},
[p_2]_{\kappa}^{\bar{n}}, \ldots , [p_{m}]_{\kappa}^{\bar{n}}$ are linearly independent. For otherwise we may eliminate some of the polynomials from \eqref{xyz2} to obtain a shorter equation, which is of the same form, but contains only linearly independent terms. Suppose first that $n\geq m$. By using Theorem \ref{differenceMason_m+1} we obtain
\begin{equation} \label{eqn:Thm3.4}
\begin{split}
n \cdot \max _{1\leq i \leq m+1} \deg p_{i} &\leq \sum_{i=1}^{m+1} \deg\mathrm{r\tilde{a}d}_{\kappa}^{[m-1]}([p_{i}]_{\kappa}^{\bar{n}})-\frac{1}{2}m(m-1).
\end{split}
\end{equation}
Further we have
\begin{eqnarray*}
\deg\mathrm{r\tilde{a}d}_{\kappa}^{[m-1]}([p_{i}]_{\kappa}^{\bar{n}})
&=& \sum_{w\in \C}\Bigl\{ \ord_w([p_i]_{\kappa}^{\bar{n}}) \\
&  &   -\min \bigl\{ \ord_w([p_i]_{\kappa}^{\bar{n}}), \ord_{w+\kappa}([p_i]_{\kappa}^{\bar{n}}), \ldots , \ord_{w+(m-1)\kappa}([p_i]_{\kappa}^{\bar{n}})\bigr\} \Bigr\}  \\
&=& \sum_{w\in \C}\Bigl\{ \ord_w(p_i)+\ord_{w+\kappa}(p_i)+\cdots+\ord_{w+(n-1)\kappa}(p_i) \\
& &   -\min \bigl\{ \ord_w(p_i)+\ord_{w+\kappa}(p_i)+\cdots+\ord_{w+(n-1)\kappa}(p_i),  \\
& &   \quad  \quad          \ord_{w+\kappa}(p_i)+\ord_{w+2\kappa}(p_i)+\cdots+\ord_{w+n\kappa}(p_i), \ldots , \\
& &   \quad \quad           \ord_{w+(m-1)\kappa}(p_i)+\ord_{w+m\kappa}(p_i)+\cdots+\ord_{w+(n+m-2)\kappa}(p_i)
                            \bigr\} \Bigr\}  \\
&\leq& \sum_{w\in \C}\bigl\{ \ord_w(p_i)+\ord_{w+\kappa}(p_i)+\cdots+\ord_{w+(m-2)\kappa}(p_i)\bigr\} \\
&\leq& (m-1)\deg p_i,
\end{eqnarray*}
since, when $m\leq n$, it follows
\begin{eqnarray*}
& & \hspace{-5ex} \min \bigl\{ \ord_w(p_i)+\ord_{w+\kappa}(p_i)+\cdots+\ord_{w+(n-1)\kappa}(p_i), \\
& &  \ord_{w+\kappa}(p_i)+\ord_{w+2\kappa}(p_i)+\cdots+\ord_{w+n\kappa}(p_i), \ldots , \\
& &  \ord_{w+(m-1)\kappa}(p_i)+\ord_{w+m\kappa}(p_i)+\cdots+\ord_{w+(n+m-2)\kappa}(p_i) \bigr\} \\
&\geq & \ord_{w+(m-1)\kappa}(p_i)+\ord_{w+m\kappa}(p_i)+\cdots+\ord_{w+(n-1)\kappa}(p_i).
\end{eqnarray*}
Therefore $(\ref{eqn:Thm3.4})$ gives
\begin{equation*}
n \cdot \max _{1\leq i \leq m+1} \deg p_{i} \leq
(m+1) (m-1) \max _{1\leq i \leq m+1} \deg p_{i} -\frac{1}{2}m(m-1),
\end{equation*}
which implies the assertion in the case $n\geq m$.

Assume now that $m> n$. Then $n\leq m-1$, and thus we have
\begin{equation*}
\deg\bigl(\mathrm{r\tilde{a}d}_{\kappa}^{[m-1]}([p_{i}]_{\kappa}^{\bar{n}})\bigr) \leq n\deg(p_i) \leq (m-1)\deg(p_i).
\end{equation*}
Therefore, by using Theorem \ref{differenceMason_m+1}, the assertion follows.
\end{proof}

\begin{example}
We consider the sharpness of the inequality \eqref{deg_upper_bound} in the case $m=2$. Let us first look at the case where the maximal degree of the polynomial solutions of \eqref{xyz2} is one. In this case it can be seen by a direct substitution of arbitrary linear polynomials into \eqref{xyz2} that such solutions are never relatively prime when $n=2$. If the maximal degree of the polynomial solutions is two, then by Theorem~\ref{diffFermatthm2} we have $n\leq 5/2$. In Example~\ref{deg2example} we have given a solution for the equation \eqref{xyz2} with $m=2$ and $n=2$, which is optimal in this case, since $n$ is an integer.
\end{example}

Theorem~\ref{diffFermatthm2} immediately implies an upper bound for $n$ in \eqref{xyz2}, which only depends on $m$ as follows.

\begin{corollary}\label{diffFermatcor2}
If the assumptions of Theorem~\ref{diffFermatthm2} are satisfied, then $n\leq m^2-2$.
\end{corollary}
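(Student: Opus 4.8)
The plan is to derive Corollary~\ref{diffFermatcor2} directly from the bound in Theorem~\ref{diffFermatthm2}. Since the assumptions of Theorem~\ref{diffFermatthm2} are in force, we have
\begin{equation*}
n\leq m^2-1-\frac{m(m-1)}{2\max_{1\leq i\leq m+1}\deg p_i}.
\end{equation*}
The quantity being subtracted, namely $\dfrac{m(m-1)}{2\max_i\deg p_i}$, is strictly positive because $m\geq 2$ forces $m(m-1)\geq 2>0$, and each $p_i$ is non-constant so $\max_i\deg p_i\geq 1$. Hence $n< m^2-1$, and since $n$ is a positive integer, $n\leq m^2-2$.

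The only point requiring a word of care is whether the subtracted term could be so small that it fails to bring $m^2-1$ strictly below $m^2-1$; but strict positivity alone suffices here, because we are working with integers, so no quantitative lower bound on $\max_i\deg p_i$ is needed. I expect no real obstacle in this argument; it is a one-line consequence of the theorem combined with the integrality of $n$. The only thing to double-check is that the hypotheses of Theorem~\ref{diffFermatthm2}, in particular $m\geq 2$ and the non-constancy of the $p_i$, are exactly what is quoted in the corollary, which they are.

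Thus the proof reduces to: invoke \eqref{deg_upper_bound}, observe that $m(m-1)/(2\max_i\deg p_i)>0$ under the stated hypotheses, conclude $n<m^2-1$, and finish by integrality of $n$.
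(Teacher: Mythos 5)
Your argument is correct and is exactly the route the paper intends: the paper states that Theorem~\ref{diffFermatthm2} ``immediately implies'' the corollary, and the implicit reasoning is precisely that the subtracted term $m(m-1)/(2\max_i\deg p_i)$ is strictly positive (since $m\geq 2$ and all $p_i$ are non-constant), so $n<m^2-1$ and integrality gives $n\leq m^2-2$. Nothing further is needed.
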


Choosing $m=2$ in Corollary~\ref{diffFermatcor2} implies the first assertion of Theorem~\ref{diffFermatthm}, namely that $n\leq 2$.


The final two results of this section deal with another canonical form of difference Fermat equations.

\begin{theorem}\label{diffFermatthm3}
If $m\geq 2$ and $p_1, \ldots , p_{m}$ are non-constant polynomials in $\mathbb{C}[z]$ such that
$[p_1]_{\kappa}^{\bar{n}},
[p_2]_{\kappa}^{\bar{n}}, \ldots ,
[p_{m}]_{\kappa}^{\bar{n}}$
are relatively prime and satisfy
\begin{equation}\label{xyz3}
[p_1]_{\kappa}^{\bar{n}}+[p_2]_{\kappa}^{\bar{n}}+ \cdots +[p_{m}]_{\kappa}^{\bar{n}}=1
\end{equation}
for some $\kappa\in\mathbb{C}\setminus\{0\}$ and $n\in\mathbb{N}$, then
\begin{equation*}
n\leq m^2 - m -\frac{m(m-1)}{2\max_{1\leq i \leq m} \deg p_{i}}.
\end{equation*}
\end{theorem}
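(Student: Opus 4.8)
The plan is to recognise \eqref{xyz3} as the special case of \eqref{a1am} in which the ``top'' polynomial $a_{m+1}$ is the constant $1$, and then to repeat, almost verbatim, the reasoning already used in the proof of Theorem~\ref{diffFermatthm2}.

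First I would reduce to the case where $[p_1]_{\kappa}^{\bar n},\ldots,[p_m]_{\kappa}^{\bar n}$ are linearly independent over $\C$, exactly as at the start of the proof of Theorem~\ref{diffFermatthm2}. If they are dependent, one term is a $\C$-linear combination of the others; substituting that relation into \eqref{xyz3} collapses it to a shorter sum $\sum_i\lambda_i[p_i]_{\kappa}^{\bar n}=1$, and each nonzero coefficient $\lambda_i$ can be absorbed into the base polynomial by replacing $p_i$ with $\mu_i p_i$, where $\mu_i^{\,n}=\lambda_i$ (here one uses that $\C$ is algebraically closed, so such $\mu_i$ exists). This produces an equation of exactly the same form \eqref{xyz3} with fewer, still non-constant and still relatively prime, base polynomials, and with $\max\deg p_i$ not increased (the reduced length is still at least $2$, since a single factorial power cannot equal $1$). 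Since the target bound can be written as $m(m-1)\bigl(1-\tfrac1{2D}\bigr)$ with $D=\max_{1\le i\le m}\deg p_i$, and this expression is non-decreasing in both $m\ge1$ and $D\ge1$, a bound of the asserted shape for the reduced equation implies the desired one, so I may assume linear independence from the outset.

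Next I would apply Theorem~\ref{differenceMason_m+1} with $a_i=[p_i]_{\kappa}^{\bar n}$ for $1\le i\le m$ and $a_{m+1}=1$: the hypotheses are met because $1$ is coprime to every polynomial, the $[p_i]_{\kappa}^{\bar n}$ are relatively prime and (by the reduction) linearly independent, and the $p_i$ are non-constant. Using $\deg 1=0$, $\tilde n_{\kappa}^{[m-1]}(1)=0$, and $\deg[p_i]_{\kappa}^{\bar n}=n\deg p_i$, Theorem~\ref{differenceMason_m+1} gives
\[
n\cdot\max_{1\le i\le m}\deg p_i\;\le\;\sum_{i=1}^{m}\tilde n_{\kappa}^{[m-1]}\bigl([p_i]_{\kappa}^{\bar n}\bigr)-\tfrac12 m(m-1).
\]
Then I would insert the estimate $\tilde n_{\kappa}^{[m-1]}\bigl([p_i]_{\kappa}^{\bar n}\bigr)\le(m-1)\deg p_i$ already established inside the proof of Theorem~\ref{diffFermatthm2} (the case $m\le n$ is the telescoping/minimum computation carried out there; the case $m>n$ is immediate from $\tilde n_{\kappa}^{[m-1]}\bigl([p_i]_{\kappa}^{\bar n}\bigr)\le\deg[p_i]_{\kappa}^{\bar n}=n\deg p_i\le(m-1)\deg p_i$). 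This yields
\[
n\cdot\max_{1\le i\le m}\deg p_i\;\le\;(m-1)\sum_{i=1}^{m}\deg p_i-\tfrac12 m(m-1)\;\le\;m(m-1)\max_{1\le i\le m}\deg p_i-\tfrac12 m(m-1),
\]
and dividing by $\max_{1\le i\le m}\deg p_i>0$ delivers exactly $n\le m^2-m-\dfrac{m(m-1)}{2\max_{1\le i\le m}\deg p_i}$.

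Most of this is routine once Theorem~\ref{differenceMason_m+1} and the radical estimate from the proof of Theorem~\ref{diffFermatthm2} are in hand. The one point needing care is the linear-independence reduction: one must check that collapsing a dependence keeps the right-hand side equal to the constant $1$ and keeps the left-hand side a sum of factorial powers of non-constant relatively prime polynomials --- which is where the absorption of constants via $n$-th roots is used --- together with the monotonicity observation guaranteeing that the reduced estimate is no weaker than the claimed bound.
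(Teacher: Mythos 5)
Your proposal is correct and follows essentially the same route as the paper: apply Theorem~\ref{differenceMason_m+1} with $a_i=[p_i]_{\kappa}^{\bar n}$ and $a_{m+1}=1$, then invoke the bound $\tilde n_{\kappa}^{[m-1]}([p_i]_{\kappa}^{\bar n})\le(m-1)\deg p_i$ from the proof of Theorem~\ref{diffFermatthm2}. The paper compresses all of this into one line ("a similar discussion as in the proof of Theorem~\ref{diffFermatthm2}"), so your explicit linear-independence reduction and monotonicity check simply fill in details the paper leaves implicit.
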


\begin{proof}
From Theorem \ref{differenceMason_m+1} we have
\begin{equation*}
n \cdot \max _{1\leq i \leq m} \deg p_{i} \leq \sum_{i=1}^{m}\deg\mathrm{r\tilde{a}d}_{\kappa}^{[m-1]}([p_i]_\kappa^{\bar{n}})-\frac{1}{2}m(m-1),
\end{equation*}
and so a similar discussion as in the proof of Theorem~\ref{diffFermatthm2} implies the assertion.
\end{proof}

\begin{corollary}\label{diffFermatcor3}
If the assumptions of Theorem~\ref{diffFermatthm3} are satisfied, then $n\leq m^2-m-1$.
\end{corollary}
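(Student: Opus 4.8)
The plan is to derive the bound $n\le m^2-m-1$ from Theorem~\ref{diffFermatthm3} by simply discarding the negative correction term in the most unfavourable case. Theorem~\ref{diffFermatthm3} gives
\[
n\le m^2-m-\frac{m(m-1)}{2\max_{1\le i\le m}\deg p_i}.
\]
Since $m\ge2$, the quantity $m(m-1)$ is a positive integer, and $\max_{1\le i\le m}\deg p_i\ge1$ because each $p_i$ is non-constant. Hence the subtracted term is strictly positive, so $n<m^2-m$, and because $n$ is an integer this already yields $n\le m^2-m-1$.

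The only point requiring a word of care is whether the correction term can be so small that rounding does not lose anything beyond the trivial integrality step — but that is exactly what we are using, and nothing sharper is claimed. I would therefore present the argument in two short lines: first note $\frac{m(m-1)}{2\max_i\deg p_i}>0$ under the stated hypotheses, then invoke the integrality of $n$ to pass from the strict inequality $n<m^2-m$ to $n\le m^2-m-1$.

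There is essentially no obstacle here; the corollary is a direct numerical consequence of the theorem, entirely parallel to the way Corollary~\ref{diffFermatcor2} follows from Theorem~\ref{diffFermatthm2}. If one wanted to be fully explicit, one could add the remark that when $\max_{1\le i\le m}\deg p_i=1$ the bound in Theorem~\ref{diffFermatthm3} reads $n\le m^2-m-\tfrac{m(m-1)}{2}$, which is already $\le m^2-m-1$ for $m\ge2$, and that increasing the maximal degree only weakens the correction term but never enough to reach $m^2-m$, so the integer bound $m^2-m-1$ holds uniformly.
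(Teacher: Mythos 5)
Your argument is correct and is exactly the (unwritten) argument the paper intends: the correction term $\tfrac{m(m-1)}{2\max_i\deg p_i}$ is strictly positive since $m\ge 2$ and each $p_i$ is non-constant, so $n<m^2-m$, and integrality of $n$ gives $n\le m^2-m-1$. The paper leaves the corollary as an immediate consequence of Theorem~\ref{diffFermatthm3}, parallel to Corollary~\ref{diffFermatcor2}, so there is nothing to add.
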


If at least one of the polynomials in the equation \eqref{xyz2} is constant, then \eqref{xyz2} reduces into \eqref{xyz3}. In particular, when $m=2$, Corollary~\ref{diffFermatcor3} then implies the second assertion of Theorem~\ref{diffFermatthm}, namely that $n = 1$.

\section{Transcendental entire solutions of Fermat type difference equations}

In this section we extend the results obtained in Section~\ref{polyFermat_sec} for the case of entire solutions of hyper-order strictly less than one. The hyper-order of an entire function $g$ is defined as
    \begin{equation*}
    \rho_2(g)=\limsup_{r\to\infty}\frac{\log^+\log^+ T(r,g)}{\log r},
    \end{equation*}
where $T(r,g)$ is the Nevanlinna characteristic function of $g$. For $\kappa\in\C\setminus\{0\}$ we denote by $\mathcal{P}^1_\kappa$ the field of period $\kappa$ meromorphic functions of hyper-order strictly less than one.

In the case of hyper-order $\geq 1$, for an arbitrary integer $n\geq 2$ there exists a transcendental entire function $f(z)$ such that $[f]_{\kappa}^{\bar{n}}$ reduces to a constant. For example, consider $f(z)=\exp \bigl(\pi(z) \omega^{z/\kappa}\bigr)$ where $\pi(z)$ is a $\kappa$-periodic entire function of order $\rho(\geq 1)$ and $\omega\not=1$ is an $n$th root of unity.
Then we have $\rho_2(f)=\rho$ and
\begin{equation}\label{ozawa_eq}
[f]_{\kappa}^{\bar{n}}=\prod_{j=0}^{n-1}f(z+j\kappa)=\prod_{j=0}^{n-1} \exp \left(\pi(z) \omega^{z/\kappa} \omega^{j}\right)
=\exp \left(\pi(z) \omega^{z/\kappa}\sum_{j=0}^{n-1}\omega^j\right)\equiv e^0=1.
\end{equation}
Here we have applied an existence theorem of prime periodic entire functions by M. Ozawa \cite[{\sc Theorems} 1 and 2]{ozawa:77}, where he proved that for arbitrarily given $\kappa\neq 0$ and $\rho$ $(1\leq \rho\leq \infty)$, there exists a $\kappa$-periodic entire function $\pi(z)$ of order $\rho$. Examples of the type \eqref{ozawa_eq} are in stark contrast to the behavior of polynomials, and so we want to rule them out in this note.

\begin{proposition}\label{diffFermatthm_entire}
Let $m\geq 2$ and let $f_1, \ldots , f_{m}$ be non-constant entire functions such that $\rho_2(f_i)<1$ for all $i\in\{1, \ldots, m\}$, and such that $[f_1]_{\kappa}^{\bar{n}}, [f_2]_{\kappa}^{\bar{n}}, \ldots, [f_m]_{\kappa}^{\bar{n}}$ are linearly independent over $\mathcal{P}_{\kappa}^1$ and furthermore $[f_1]_{\kappa}^{\bar{n}}, [f_2]_{\kappa}^{\bar{n}}, \ldots, [f_m]_{\kappa}^{\bar{n}}$ and $[f_{m+1}]_{\kappa}^{\bar{n}}$ have no common zeros. If
\begin{equation*}\label{diffFermat_entire}
[f_1]_{\kappa}^{\bar{n}}+[f_2]_{\kappa}^{\bar{n}}+ \cdots +[f_{m}]_{\kappa}^{\bar{n}}= [f_{m+1}]_{\kappa}^{\bar{n}}
\end{equation*}
for some $\kappa\in\mathbb{C}\setminus\{0\}$ and $n\in\mathbb{N}$, then $n\leq m^2-1$.
\end{proposition}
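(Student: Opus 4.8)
The plan is to run a Nevanlinna-theoretic analogue of the argument behind Theorem~\ref{differenceMason_m+1}, replacing degrees of polynomials by characteristic functions and the Casoratian degree estimate by the difference analogue of the second main theorem for holomorphic curves (as announced in the introduction, to be established later in the paper). First I would set $g_i = [f_i]_\kappa^{\bar n}$ for $i=1,\ldots,m+1$, so that the hypothesis reads $g_1 + \cdots + g_m = g_{m+1}$ with $g_1,\ldots,g_m$ linearly independent over $\mathcal{P}^1_\kappa$ and with no common zeros among $g_1,\ldots,g_{m+1}$. Since $\rho_2(f_i)<1$, each $g_i$ has hyper-order $<1$, so the difference logarithmic derivative lemma and the machinery of difference Nevanlinna theory of hyper-order less than one apply to the $g_i$ and their shifts. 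The holomorphic curve $(g_1:\cdots:g_m)$, together with the hyperplane given by $x_1+\cdots+x_m=0$ corresponding to the relation $g_{m+1}= g_1+\cdots+g_m$, plus the coordinate hyperplanes, gives $m+1$ hyperplanes in general position whose zero counting functions are governed by the truncated second main theorem.

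Next I would quantify the zeros. The key point, exactly as in Theorem~\ref{differenceMason_m+1}, is that at a zero $z_0$ of $g_i$ the Casoratian $\mathcal{C}_\kappa(g_1,\ldots,g_m)$ vanishes to order at least $\min_{0\le j\le m-1}\operatorname{ord}_{z_0}\!\big(g_i(z+j\kappa)\big)$; this lets one bound the truncated counting function $N^{[m-1]}(r,1/g_i)$ — which counts zeros of $g_i$ with multiplicity cut at $m-1$, the entire-function analogue of $\tilde n_\kappa^{[m-1]}$ — in terms of $N(r,1/g_i)$ minus the contribution of the Casoratian. Because $g_i = [f_i]_\kappa^{\bar n}$ is a product of $n$ shifts of $f_i$, one has $N(r,1/g_i) = n\, N(r,1/f_i) + S(r,f_i) \le n\, T(r,f_i) + S$, while $T(r,g_i) = n\,T(r,f_i) + S$ by the same shift-invariance of the characteristic up to $S(r)$ for hyper-order $<1$; so the factor $n$ appears on both sides and, after the truncation is absorbed, the combinatorial constant $\tfrac12 m(m-1)$ from the second main theorem survives. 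Summing the truncated second main theorem over the $m+1$ hyperplanes and using $N^{[m-1]}(r,1/g_i)\le (m-1)T(r,f_i)+S$ (the entire analogue of $\tilde n_\kappa^{[m-1]}(a_i)\le (m-1)\deg a_i$ proved inside Theorem~\ref{diffFermatthm2}), one arrives at an inequality of the shape
\[
n\,\max_i T(r,f_i) \le (m+1)(m-1)\,\max_i T(r,f_i) + S(r),
\]
and dividing through by $\max_i T(r,f_i)\to\infty$ and letting $r\to\infty$ outside an exceptional set yields $n\le m^2-1$.

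The main obstacle, and the place where I expect to spend real effort, is making the second main theorem step rigorous in this difference setting: one must invoke the truncated difference second main theorem for holomorphic curves (which the introduction promises and which I would assume as available) with the correct error term $S(r)$ valid for hyper-order strictly less than one, and one must check that the no-common-zeros hypothesis on $g_1,\ldots,g_{m+1}$ genuinely forces the relevant hyperplanes into general position so that the truncated defect relation applies with truncation level $m-1$. A secondary technical point is handling the two regimes $n\ge m$ and $n<m$ separately, just as in the proof of Theorem~\ref{diffFermatthm2}: when $n<m$ the bound $N^{[m-1]}(r,1/g_i)\le n\,T(r,f_i)+S$ is the better one, but it still feeds the same final inequality since $n\le m-1\le m^2-1$ trivially, so no extra work is needed there. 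Everything else — the shift-invariance of $T$ and $N$ up to $S(r)$, the additivity of counting functions over the product $[f_i]_\kappa^{\bar n}$, the Casoratian vanishing estimate — is routine given the tools set up earlier in the paper.
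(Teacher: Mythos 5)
Your overall architecture is the paper's: the truncated difference second main theorem (Theorem~\ref{trunc_diff_2mt}), the truncation lemma for $[f]_\kappa^{\overline{n}}$ (Lemma~\ref{Truncation_lemma}) and the shift lemma (Lemma~\ref{technical}) are exactly the tools the proof uses. But the final displayed inequality, which is the crux of the argument, has a genuine gap on its left-hand side. The second main theorem bounds $T_g(r)$ from \emph{above}, so to extract information about $n$ you need a lower bound for $T_g(r)$ carrying a factor of $n$. The bound you assert, $T_g(r)\geq n\max_i T(r,f_i)+S(r)$, is false in general: the Cartan characteristic is projectively invariant, so a common zero-free factor of large growth in the $g_i=[f_i]_\kappa^{\overline{n}}$ contributes nothing to $T_g(r)$ while inflating every $T(r,f_i)$. (Take $f_i=e^{z^2}h_i$ with polynomials $h_i$: then $g_i=e^{Q}[h_i]_\kappa^{\overline{n}}$ for a common polynomial $Q$, so $T_g(r)=O(\log r)$ while $\max_i T(r,f_i)\asymp r^2$.) What the first main theorem actually gives --- using the no-common-zeros hypothesis, so that $[g_1:\cdots:g_m]$ is a reduced representation --- is $N(r,1/g_j)\leq T_g(r)+O(1)$ for each $j$; hence the valid $n$-carrying lower bound is $\sup_j N(r,1/g_j)$, which by Lemma~\ref{technical} equals $n\sup_j N(r,1/f_j)+o(\cdot)$.

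A second, compounding problem is that you bound the right-hand side by $(m-1)T(r,f_i)+S$ rather than by $(m-1)N(r,1/f_i)+o(N(r,1/f_i))$, which is what Lemma~\ref{Truncation_lemma} actually delivers. Even after repairing the left-hand side to $n\sup_j N(r,1/f_j)$, an upper bound in terms of $T(r,f_i)$ is useless: $\sup_j N(r,1/f_j)$ may well be $o\bigl(\max_i T(r,f_i)\bigr)$, and then the inequality $n\sup_j N(r,1/f_j)\leq(m^2-1)\max_i T(r,f_i)+S(r)$ places no restriction on $n$. The repair is to keep both sides in the single currency $M(r)=\sup_j N(r,1/f_j)$, which yields $nM(r)\leq(m+1)(m-1)M(r)+o(M(r))$ and hence $n\leq m^2-1$; this is precisely how the paper argues. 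Your remaining points (the two regimes $n\geq m$ and $n<m$, the role of linear independence over $\mathcal{P}^1_\kappa$ and of the no-common-zeros hypothesis) are handled correctly and match the paper.
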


In Proposition~\ref{diffFermatthm_entire} we have assumed that the entire functions $[f_1]_{\kappa}^{\bar{n}}, [f_2]_{\kappa}^{\bar{n}}, \ldots, [f_{m+1}]_{\kappa}^{\bar{n}}$ do not have common zeros in an analogy of the assumption of relative primeness of the polynomials in Theorem~\ref{diffFermatthm2}.

\begin{proposition}\label{diffFermatthm_entire2}
Let $m\geq 2$ and let $f_1, \ldots , f_{m}$ be non-constant entire functions such that $\rho_2(f_i)<1$ for all $i\in\{1,\ldots,m\}$, and such that
$[f_1]_{\kappa}^{\bar{n}},
[f_2]_{\kappa}^{\bar{n}}, \ldots ,
[f_{m}]_{\kappa}^{\bar{n}}$
are linearly independent over $\mathcal{P}_{\kappa}^1$. If
\begin{equation*}\label{diffFermat_entire2}
[f_1]_{\kappa}^{\bar{n}}+[f_2]_{\kappa}^{\bar{n}}+ \cdots +[f_{m}]_{\kappa}^{\bar{n}}= 1,
\end{equation*}
for some $\kappa\in\mathbb{C}\setminus\{0\}$ and $n\in\mathbb{N}$, then $n\leq m^2-m$.
\end{proposition}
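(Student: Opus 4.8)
The plan is to translate the derivation of Theorem~\ref{diffFermatthm3} from Theorem~\ref{differenceMason_m+1} into Nevanlinna theory, using in place of the polynomial Mason-type inequality the truncated second main theorem for differences for linearly non-degenerate holomorphic curves of hyper-order less than one. Put $g_i=[f_i]_{\kappa}^{\bar{n}}$ for $i=1,\dots,m$. Shifting preserves the hyper-order and a finite product does not increase it, so each $g_i$ is entire with $\rho_2(g_i)<1$; from $g_1+\cdots+g_m\equiv1$ the $g_i$ have no common zero, so $[g_1:\cdots:g_m]$ is a reduced representation of a holomorphic curve $F\colon\C\to\P^{m-1}$; and the assumption that $g_1,\dots,g_m$ are linearly independent over $\mathcal P_{\kappa}^1$ is exactly what forces the Casoratian $\mathcal C_{\kappa}(g_1,\dots,g_m)$ to be $\not\equiv0$, i.e.\ makes $F$ linearly non-degenerate. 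If $n<m$ then $n\le m-1\le m^2-m$ (recall $m\ge2$) and we are done, so I would suppose $n\ge m$ from here on.

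Next I would feed $F$ and the $m+1$ hyperplanes $H_i=\{x_i=0\}$, $1\le i\le m$, and $H_{m+1}=\{x_1+\cdots+x_m=0\}$ --- which are in general position in $\P^{m-1}$, since any $m$ of them meet only at the origin --- into the truncated difference second main theorem. With $q=m+1$ hyperplanes in $\P^{m-1}$ it yields
    \begin{equation*}
    T(r,F)\le\sum_{j=1}^{m+1}\widetilde N_{\kappa}^{[m-1]}\!\Bigl(r,\tfrac{1}{L_j(F)}\Bigr)+o\bigl(T(r,F)\bigr)
    \end{equation*}
as $r\to\infty$ outside a set of finite logarithmic measure, where $L_j$ is the linear form defining $H_j$, $L_j(F)$ is the entire function $L_j(g_1,\dots,g_m)$, and $\widetilde N_{\kappa}^{[m-1]}(r,1/g)$ is the counting function attached to $\mathrm{r\tilde{a}d}_{\kappa}^{[m-1]}$, i.e.\ the one whose density at radius $t$ is $\sum_{|w|\le t}\bigl(\ord_w(g)-\min_{0\le j\le m-1}\ord_{w+j\kappa}(g)\bigr)$ in the notation of \eqref{eqn:n^m}. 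Since $L_{m+1}(F)=g_1+\cdots+g_m\equiv1$ has no zeros, the term $j=m+1$ vanishes, and $L_i(F)=g_i$ for $i\le m$, so $T(r,F)\le\sum_{i=1}^m\widetilde N_{\kappa}^{[m-1]}(r,1/g_i)+o(T(r,F))$.

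Two estimates complete the argument. First, the local computation from the proof of Theorem~\ref{diffFermatthm2}: because $n\ge m$, at each $w$ one has $\ord_w(g_i)-\min_{0\le j\le m-1}\ord_{w+j\kappa}(g_i)\le\sum_{l=0}^{m-2}\ord_{w+l\kappa}(f_i)$, and summing over $w$ (the finitely many extra shifts being harmless because $\rho_2(f_i)<1$) gives $\widetilde N_{\kappa}^{[m-1]}(r,1/g_i)\le(m-1)N(r,1/f_i)+o(T(r,f_i))\le(m-1)T(r,f_i)+o(T(r,f_i))$. Second, a growth comparison: the $g_i$ are entire and $\|F\|$ is bounded below by a positive constant (from $\sum g_i\equiv1$), so $T(r,g_i)=m(r,g_i)\le T(r,F)+O(1)$; while from the identity $f_i(z)^n=g_i(z)\prod_{l=1}^{n-1}\bigl(f_i(z)/f_i(z+l\kappa)\bigr)$ and the difference analogue of the lemma on the logarithmic derivative (available since $\rho_2(f_i)<1$) one obtains $T(r,g_i)=nT(r,f_i)+o(T(r,f_i))$. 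Hence $(1+o(1))\,nT(r,f_i)\le(m-1)\sum_{k=1}^m T(r,f_k)+o\bigl(\sum_k T(r,f_k)\bigr)$ for each $i$; summing over $i$ and dividing by $\sum_i T(r,f_i)\to\infty$ gives $n\le m(m-1)+o(1)$, i.e.\ $n\le m^2-m$.

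The content that is not routine lies in two places. One is the truncated second main theorem for differences itself, which must be on hand in the sharp form whose counting functions are those of the difference radical $\mathrm{r\tilde{a}d}_{\kappa}^{[m-1]}$ rather than the ordinary truncated counting function $N^{[m-1]}$ (Example~\ref{example_crude} already shows the ordinary truncation is too crude to reach the desired exponent), with linear independence over $\mathcal P_{\kappa}^1$ providing precisely the required non-degeneracy; one also has to make sure the exceptional set of finite logarithmic measure does not obstruct the limit $r\to\infty$. The other is the fact that $T(r,[f_i]_{\kappa}^{\bar{n}})$ is genuinely of size $nT(r,f_i)$ and does not collapse: this is exactly where the hypothesis $\rho_2(f_i)<1$ is indispensable, it is what the difference logarithmic derivative lemma yields, and it is what rules out phenomena such as $[f]_{\kappa}^{\bar{n}}\equiv1$ displayed before Proposition~\ref{diffFermatthm_entire}.
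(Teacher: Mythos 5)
Your proposal is correct and follows the same skeleton as the paper's proof: both apply the truncated difference second main theorem (Theorem~\ref{trunc_diff_2mt}) to the curve with coordinates $g_i=[f_i]_\kappa^{\bar{n}}$, with the term coming from $g_{m+1}=g_1+\cdots+g_m\equiv 1$ contributing nothing, and both reduce $\widetilde N^{[m-1]}_\kappa\bigl(r,1/[f_i]_\kappa^{\bar{n}}\bigr)$ to $(m-1)N(r,1/f_i)+o(\cdot)$ via the local order computation of Lemma~\ref{Truncation_lemma}. The one place you genuinely diverge is the normalization of the two sides of the inequality. The paper stays entirely with zero-counting functions, comparing $N(r)=\sup_j N(r,1/g_j)$ with $M(r)=\sup_j N(r,1/f_j)$ through the elementary inclusion $nM(r-n|\kappa|)\le N(r)\le nM(r+n|\kappa|)$ and Lemma~\ref{technical}, so it never needs the difference analogue of the lemma on the logarithmic derivative. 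You instead pass to characteristic functions, deriving $T\bigl(r,[f_i]_\kappa^{\bar{n}}\bigr)=nT(r,f_i)+o(T(r,f_i))$ from the identity $f_i^n=[f_i]_\kappa^{\bar{n}}\prod_{l}\bigl(f_i(z)/f_i(z+l\kappa)\bigr)$ together with the difference logarithmic derivative lemma, which is legitimate here because $\rho_2(f_i)<1$ and $f_i$ is entire (so $T=m$ and the proximity functions of the quotients are $o(T(r,f_i))$). Your route imports a heavier external tool from \cite{halburdkt:14TAMS}, but it buys a cleaner bookkeeping of the error term: your final inequality is measured against $\sum_k T(r,f_k)$, which manifestly dominates the $o(T_g(r))$ error in \eqref{2ndMT}, whereas the paper's version measures everything against $M(r)$ and must absorb $o(T_g(r))$ there. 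Both arguments close to give $n\le m(m-1)$.
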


Before we can prove Propositions~\ref{diffFermatthm_entire} and \ref{diffFermatthm_entire2}, we need to introduce tools to handle entire functions. In particular, we will consider an extension of the notion of difference radical for entire functions, and define the corresponding Nevanlinna counting functions.

The \textit{order} of a holomorphic curve $g:\C\to\P^n$ is defined by
    \begin{equation*}\label{order}
    \sigma(g)=\limsup_{r\to\infty}\frac{\log^+ T_g(r)}{\log
    r},
    \end{equation*}
where $\log^+x=\max\{0,\log x\}$ for all $x\ge0$, and
    \begin{equation*}\label{characteristic}
    T_g(r):=\int_0^{2\pi}u(re^{i\theta})\frac{d\theta}{2\pi}-u(0),
    \quad u(z)=\sup_{k\in \{0,\ldots,n\}} \log|g_k(z)|,
    \end{equation*}
is the Cartan characteristic function of $g$ with the reduced representation $g=[g_0:\cdots:g_n]$. Similarly,
the \textit{hyper-order} of $g:\C\to\P^n$ is
    \begin{equation*}\label{horder}
    \varsigma(g)=\limsup_{r\to\infty}\frac{\log^+\log^+ T_g(r)}{\log
    r}.
    \end{equation*}
The following lemma \cite[Lemma~8.3]{halburdkt:14TAMS} is a useful tool in dealing with shifts in characteristic and Nevanlinna counting functions.

\begin{lemma}[\cite{halburdkt:14TAMS}]\label{technical}
Let $T:[0,+\infty)\to[0,+\infty)$ be a non-decreasing continuous
function and let $s\in(0,\infty)$. If the hyper-order of $T$ is
strictly less than one, i.e.,
    \begin{equation*}\label{assu}
    \limsup_{r\to\infty}\frac{\log\log T(r)}{\log r}=\varsigma<1
    \end{equation*}
and $\delta\in(0,1-\varsigma)$, then
   \begin{equation*}\label{concl}
    T(r+s) = T(r)+ o\left(\frac{T(r)}{r^{\delta}}\right),
    \end{equation*}
where $r$ runs to infinity outside of a set of finite logarithmic
measure.
\end{lemma}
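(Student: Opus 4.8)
The plan is to prove the estimate by a Borel‑type covering argument applied to $g:=\log T$, using the hyper‑order hypothesis precisely to force $g$ to grow no faster than polynomially, so that a certain counting series converges. First I would dispose of trivial cases: since $T$ is non‑decreasing, either it is bounded — in which case $\log^+T$ has finite total variation and the argument below degenerates to an easier one — or $T(r)\to\infty$, which I assume henceforth, so that $g:=\log T$ is continuous, non‑decreasing, eventually positive, and tends to $+\infty$. Since $\varsigma<1-\delta$, I fix $\mu$ with $\varsigma<\mu<1-\delta$; by the definition of $\varsigma$ there is then an $r_0$ (enlarged as needed) with $g(r)\le r^{\mu}$ for all $r\ge r_0$. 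The one arithmetic fact used repeatedly is $\mu+\delta<1$.

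The core step is to show that for each fixed $c>0$ the set
\[
B_c=\bigl\{\,r\ge r_0:\ T(r+s)>(1+c\,r^{-\delta})\,T(r)\,\bigr\}
\]
has finite logarithmic measure. Taking logarithms, $r\in B_c$ forces $g(r+s)-g(r)>\log(1+c\,r^{-\delta})\ge\tfrac12 c\,r^{-\delta}$ once $c\,r^{-\delta}\le1$. From the cover $\{[r,r+s]:r\in B_c\}$ of $B_c$ I extract, by a Vitali‑type argument, a subfamily $\{[r_k,r_k+s]\}_k$ with each $r_k\in B_c$, having uniformly bounded overlap $\le N_0$, that still covers $B_c$. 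Writing $\nu$ for the Lebesgue--Stieltjes measure of $g$ (atomless, since $g$ is continuous), bounded overlap gives, for every $R$,
\[
\sum_{r_k\le R}\bigl(g(r_k+s)-g(r_k)\bigr)=\sum_{r_k\le R}\nu\bigl([r_k,r_k+s]\bigr)\le N_0\,\nu\bigl([r_0,R+s]\bigr)\le N_0\,(R+s)^{\mu},
\]
while the left‑hand side is at least $\tfrac12 c\sum_{r_k\le R}r_k^{-\delta}\ge\tfrac12 c\,N(R)\,R^{-\delta}$, where $N(R)=\#\{k:r_k\le R\}$. Hence $N(R)=O(R^{\mu+\delta})$, and since $\mu+\delta<1$ a partial‑summation estimate gives $\sum_k r_k^{-1}<\infty$, so that
\[
\int_{B_c}\frac{dr}{r}\le\sum_k\int_{r_k}^{r_k+s}\frac{dr}{r}\le s\sum_k\frac{1}{r_k}<\infty.
\]

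Finally I would upgrade this to the required $o$‑statement by diagonalizing over $c=1/n$: choosing $R_n\uparrow\infty$ with $\int_{B_{1/n}\cap[R_n,\infty)}dr/r<2^{-n}$ (possible since $\int_{B_{1/n}}dr/r<\infty$) and setting $E=\bigcup_{n\ge1}\bigl(B_{1/n}\cap[R_n,R_{n+1})\bigr)$, which has finite logarithmic measure, one finds that for $r\notin E$ with $r\in[R_n,R_{n+1})$ one has $r\notin B_{1/n}$, i.e.\ $T(r+s)-T(r)\le\tfrac1n\,T(r)\,r^{-\delta}$; since $n=n(r)\to\infty$ as $r\to\infty$, this is precisely $T(r+s)=T(r)+o\bigl(T(r)/r^{\delta}\bigr)$ off $E$. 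The main obstacle is the core step: one must arrange the covering so that the chosen intervals are essentially disjoint (greedy $s$‑spacing, plus continuity of $g$ to kill Stieltjes atoms), and then convert the polynomial count $N(R)=O(R^{\mu+\delta})$ into the convergence $\sum_k r_k^{-1}<\infty$ — and it is exactly at the inequality $\mu+\delta<1$ that the hypothesis $\delta<1-\varsigma$ is used. Everything else is routine bookkeeping; this is precisely Lemma~8.3 of \cite{halburdkt:14TAMS}, whose proof runs along these lines.
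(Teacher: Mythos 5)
The paper does not actually prove Lemma~\ref{technical}: it is imported verbatim from \cite[Lemma~8.3]{halburdkt:14TAMS}, so there is no in-paper argument to compare against. Your reconstruction is correct and runs along essentially the same lines as the cited source: cover the exceptional set $B_c$ by an essentially disjoint family of length-$s$ intervals with left endpoints $r_k\in B_c$, telescope the increments of $\log T$ against the hyper-order bound $\log T(R)\le R^{\mu}$ to obtain the count $N(R)=O(R^{\mu+\delta})$, and use $\mu+\delta<1$ (this is exactly where $\delta<1-\varsigma$ enters) together with partial summation to get $\sum_k r_k^{-1}<\infty$, hence finite logarithmic measure; the diagonalization over $c=1/n$ is the standard upgrade to the $o(\cdot)$ form, and the bounded case is indeed easier since the total variation of $T$ is then finite.
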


We denote by $\overline{D}(s,z_0)=\{z\in\C:|z-z_0|\leq s\}$ the closed disc of radius $s>0$ centred at $z_0\in\C$. We define, as in \cite{cherryy:01}, the \textit{order} $\ord_\zeta (f)$ of a meromorphic function $f$ at $\zeta\in\C$ as the unique $\mu\in\Z$ such that
    \begin{equation*}
    \lim_{z\to\zeta} \frac{f(z)}{(z-\zeta)^\mu}\in\C\setminus\{0\}.
    \end{equation*}
With this notation $\ord_\zeta (f)>0$ if and only if $f$ has a zero of order $\ord_\zeta (f)$ at $\zeta$, and $\ord_\zeta (f)<0$ if and only if $f$ has a pole of order $-\ord_\zeta (f)$ at $\zeta$. We also adopt the notation   $\ord^+_\zeta (f)=\max\{0,\ord_\zeta (f)\}$ and $\ord^-_\zeta (f)=\max\{0,-\ord_\zeta (f)\}$. Now, given $q\in\N$, we define
    \begin{equation*}
    \tilde n^{[q]}_\kappa\left(r,\frac{1}{f}\right) = \sum_{w\in\overline{D}(0,r)}\left(\ord^+_w (f) - \min_{0\leq j\leq q}\{\ord^+_{w+j\kappa} (f)\}\right)
    \end{equation*}
as a difference analogue of the truncated counting function for the zeros of $f$. The corresponding integrated counting function is defined in the usual way as
    \begin{equation}\label{diffTruncN}
    \widetilde N^{[q]}_\kappa\left(r,\frac{1}{f}\right) = \int_0^r\frac{\tilde n^{[q]}_\kappa(t,1/f)-\tilde n^{[q]}_\kappa(0,1/f)}{t}\,dt + \tilde n^{[q]}_\kappa(0,1/f)\log r.
    \end{equation}
Also, by defining
    \begin{equation*}
    \lambda_2(f)=\limsup_{r\to\infty}\frac{\log^+\log^+ N\left(r,\frac{1}{f}\right)}{\log r},
    \end{equation*}
it follows that $\lambda_2(f)\leq \rho_2(f)$. The following lemma demonstrates how the truncation works with the counting function \eqref{diffTruncN}.

\begin{lemma}\label{Truncation_lemma}
Let $f\not\equiv0$ be entire, let $\kappa\in\C\setminus\{0\}$ and let $n,q\in\N$. If $\lambda_2(f)<1$, then
    \begin{equation*}
    \widetilde N^{[q]}_\kappa\left(r,\frac{1}{[f]_\kappa^{\overline{n}}}\right) \leq q N\left(r,\frac{1}{f}\right) + o\left(N(r,1/f)\right)
    \end{equation*}
as $r\to\infty$ outside of an exceptional set of finite logarithmic measure.
\end{lemma}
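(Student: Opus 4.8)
The plan is to reduce the statement to a pointwise (local) estimate on orders of vanishing, and then integrate and apply Lemma~\ref{technical} to absorb the shifts into the error term. First I would fix $w\in\C$ and compare the local contribution of $w$ to $\tilde n^{[q]}_\kappa(r,1/[f]_\kappa^{\overline n})$ with the local contributions of $w, w+\kappa, \dots$ to $n(r,1/f)$. Writing $g=[f]_\kappa^{\overline n}$, we have $\ord^+_w(g)=\sum_{i=0}^{n-1}\ord^+_{w+i\kappa}(f)$, and similarly for the shifts $w+j\kappa$ appearing in the truncated counting function. The key local inequality I would establish is
    \begin{equation*}
    \ord^+_w(g)-\min_{0\le j\le q}\{\ord^+_{w+j\kappa}(g)\}\ \le\ q\sum_{i=0}^{n-1}\ord^+_{w+i\kappa}(f)-\text{(telescoping overlap)},
    \end{equation*}
but more to the point I would argue directly that $\ord^+_w(g)-\min_{0\le j\le q}\{\ord^+_{w+j\kappa}(g)\}\le \ord^+_w(f)+\ord^+_{w+\kappa}(f)+\cdots$, i.e. that after subtracting the minimum over the $q+1$ consecutive shifted blocks, at most $q$ of the ``columns'' $\ord^+_{w+i\kappa}(f)$ survive for each starting point, so the whole thing is dominated by a sum in which each zero of $f$ at a point $w+k\kappa$ is counted at most $q$ times. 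This is the analogue, at the level of orders, of the chain of inequalities already carried out in the proof of Theorem~\ref{diffFermatthm2}, where $\min$ over the shifted partial sums is bounded below by dropping the first few terms.

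Next I would sum this pointwise estimate over all $w\in\overline D(0,r)$. The surviving sum on the right is then bounded by $q$ times the number of zeros of $f$ in a slightly enlarged disc, say $\overline D(0,r+q|\kappa|)$, counted with multiplicity; that is,
    \begin{equation*}
    \tilde n^{[q]}_\kappa\!\left(r,\frac{1}{[f]_\kappa^{\overline n}}\right)\ \le\ q\,n\!\left(r+C,\frac{1}{f}\right)
    \end{equation*}
for a constant $C$ depending only on $\kappa$, $n$ and $q$ (the blocks defining $g$ and the truncation length together push zeros by at most $(n+q-1)|\kappa|$). Then I would integrate: inserting this into the definition \eqref{diffTruncN} of $\widetilde N^{[q]}_\kappa$ gives
    \begin{equation*}
    \widetilde N^{[q]}_\kappa\!\left(r,\frac{1}{[f]_\kappa^{\overline n}}\right)\ \le\ q\,N\!\left(r+C,\frac{1}{f}\right)+O(\log r),
    \end{equation*}
where the $O(\log r)$ absorbs the difference between integrating $n(t,1/f)/t$ and $n(t+C,1/f)/t$ near the origin together with the $\log r$ terms in \eqref{diffTruncN}.

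Finally I would remove the shift $C$ in the argument of $N$. Since $N(r,1/f)$ is non-decreasing and continuous in $r$ and $\lambda_2(f)<1$, the function $T(r)=N(r,1/f)$ satisfies the hypothesis of Lemma~\ref{technical} with $\varsigma=\lambda_2(f)<1$; applying that lemma with $s=C$ gives $N(r+C,1/f)=N(r,1/f)+o\!\big(N(r,1/f)/r^{\delta}\big)$ outside a set of finite logarithmic measure, and in particular $N(r+C,1/f)=N(r,1/f)+o(N(r,1/f))$. Combining this with the previous display, and noting that $N(r,1/f)\to\infty$ since $f$ is entire with infinitely many zeros (if $f$ has only finitely many zeros both sides are trivially controlled), yields
    \begin{equation*}
    \widetilde N^{[q]}_\kappa\!\left(r,\frac{1}{[f]_\kappa^{\overline n}}\right)\ \le\ q\,N\!\left(r,\frac{1}{f}\right)+o\!\left(N(r,1/f)\right)
    \end{equation*}
outside an exceptional set of finite logarithmic measure, as claimed. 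The main obstacle I anticipate is the bookkeeping in the pointwise step: making precise, with multiplicities and over several consecutive shifted blocks simultaneously, the claim that ``subtracting the minimum kills all but $q$ copies of each zero,'' and tracking exactly how far zeros can be displaced so that the enlarged radius $r+C$ is correct. Once that combinatorial inequality is nailed down, the integration and the application of Lemma~\ref{technical} are routine.
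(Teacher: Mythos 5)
Your proposal is correct and takes essentially the same route as the paper: the same pointwise observation that all $q+1$ shifted blocks in the minimum share the common tail $\sum_{i=q}^{n-1}\ord^+_{w+i\kappa}(f)$, so that at most $q$ ``columns'' survive the subtraction, followed by integration and an application of Lemma~\ref{technical} to absorb the resulting shifts into the error term. The only cosmetic difference is that the paper keeps the intermediate bound in the form $\sum_{i=0}^{q-1}N\bigl(r,1/f(z+i\kappa)\bigr)$ rather than $q\,N(r+C,1/f)$ before invoking Lemma~\ref{technical}.
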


\begin{proof}
Suppose first that $n>q$. Then by definition
    \begin{equation}\label{ord_calc}
    \begin{split}
    \widetilde n^{[q]}_\kappa\left(r,\frac{1}{[f]_\kappa^{\overline{n}}}\right) &=\sum_{w\in\overline{D}(0,r)}\left(\sum_{i=0}^{n-1} \ord^+_w f(z+i\kappa) - \min_{j\in\{0,\ldots,q\}}\left\{\sum_{i=0}^{n-1} \ord^+_{w+j\kappa} f(z+i\kappa)\right\}\right)\\
    &= \sum_{w\in\overline{D}(0,r)}\left(\sum_{i=0}^{n-1} \ord^+_{w+i\kappa} (f) - \min_{j\in\{0,\ldots,q\}}\left\{\sum_{i=0}^{n-1} \ord^+_{w+(i+j)\kappa} (f)\right\}\right).
    \end{split}
    \end{equation}
Each term in the minimum on the right hand side of \eqref{ord_calc} contains the sum $\sum_{i=q}^{n-1}\ord^+_{w+i\kappa}(f)$. To see this, we may write $\sum_{i=0}^{n-1}\ord^+_{w+(i+j)\kappa}(f)=\sum_{k=j}^{n+j-1}\ord^+_{w+k\kappa}(f)$, if necessary. Therefore, it follows by \eqref{ord_calc} that
    \begin{equation}\label{ord_calc2}
    \begin{split}
    \widetilde n^{[q]}_\kappa\left(r,\frac{1}{[f]_\kappa^{\overline{n}}}\right) &\leq \sum_{w\in\overline{D}(0,r)}\left(\sum_{i=0}^{n-1} \ord^+_{w+i\kappa} (f) - \sum_{i=q}^{n-1} \ord^+_{w+i\kappa} (f) \right) \\
    &= \sum_{w\in\overline{D}(0,r)}\left(\sum_{i=0}^{q-1} \ord^+_{w+i\kappa} (f) \right) \\
    &= \sum_{i=0}^{q-1} n\left(r,\frac{1}{f(z+i\kappa)}\right).
    \end{split}
    \end{equation}
By integrating \eqref{ord_calc2} it follows that
    \begin{equation}\label{ord_calc3}
    \widetilde N^{[q]}_\kappa\left(r,\frac{1}{[f]_\kappa^{\overline{n}}}\right) \leq \sum_{i=0}^{q-1} N\left(r,\frac{1}{f(z+i\kappa)}\right),
    \end{equation}
since $n>q$. If $n\leq q$, the inequality \eqref{ord_calc3} holds trivially, so in fact we have \eqref{ord_calc3} for all $n\in\N$. The assertion now follows by Lemma~\ref{technical}.
\end{proof}


The following result is a truncated second main theorem for differences.

\begin{theorem}\label{trunc_diff_2mt}
Let $g_1,\ldots,g_m$ be $m\geq 2$ entire functions with no common zeros, linearly independent over $\mathcal{P}^1_\kappa$, and let $g_{m+1}=g_1 + \cdots + g_m$. If the holomorphic curve $g=[g_1:\cdots:g_m]$ satisfies $\varsigma(g)<1$, then
    \begin{equation}\label{2ndMT}
    T_g(r) \leq \sum_{j=1}^{m+1} \widetilde N^{[m-1]}_\kappa\left(r,\frac{1}{g_j}\right) + o\left(\frac{T_g(r)}{r^{1-\varsigma(g)-\varepsilon}}\right),
    \end{equation}
where $\kappa\in\C\setminus\{0\}$, $\varepsilon>0$, and $r\to\infty$ outside of an exceptional set of finite logarithmic measure.
\end{theorem}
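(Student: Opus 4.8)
The plan is to carry out the difference-operator analogue of Cartan's proof of the truncated second main theorem: the Wronskian is replaced by the Casoratian, and the classical lemma on the logarithmic derivative by its difference counterpart, which here is a consequence of Lemma~\ref{technical}. Let $H_1=\{x_1=0\},\dots,H_m=\{x_m=0\}$ and $H_{m+1}=\{x_1+\cdots+x_m=0\}$ be the relevant hyperplanes, so that the curve $g=[g_1:\cdots:g_m]$ meets $H_j$ precisely at the zeros of $g_j$; note that linear independence of $g_1,\dots,g_m$ over $\mathcal{P}^1_\kappa$ forces $g_{m+1}=g_1+\cdots+g_m\not\equiv0$. The first step is to form the Casoratian
\[
\mathcal{C}(z)=\det\bigl(g_i(z+(j-1)\kappa)\bigr)_{1\le i,j\le m}
\]
and to observe that $\mathcal{C}\not\equiv0$: since $\varsigma(g)<1$ forces $\rho_2(g_i)<1$ for each $i$, and entire functions of hyper-order less than one are linearly dependent over $\mathcal{P}^1_\kappa$ if and only if their Casoratian vanishes identically, the hypothesis of linear independence gives $\mathcal{C}\not\equiv0$. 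Because the determinant is multilinear and alternating in its rows and $g_{m+1}$ is the sum of $g_1,\dots,g_m$, for every $m$-element subset $K\subseteq\{1,\dots,m+1\}$ we have $\det\bigl(g_i(z+(j-1)\kappa)\bigr)_{i\in K}=\pm\,\mathcal{C}(z)$.

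\emph{Counting side.} Fix $w\in\C$, set $\mu_j(w)=\min_{0\le\ell\le m-1}\ord^+_{w+\ell\kappa}(g_j)$, and let $T=\{j:g_j(w)=0\}$. Since $g_1,\dots,g_m$ have no common zero, at most $m-1$ of the values $g_1(w),\dots,g_{m+1}(w)$ vanish, so $|T|\le m-1$ and we may choose an $m$-element subset $K\supseteq T$. Factoring $(z-w)^{\mu_j(w)}$ out of the $j$-th row of $\det\bigl(g_i(z+(j-1)\kappa)\bigr)_{i\in K}=\pm\mathcal{C}(z)$ for each $j\in K$ shows that $\ord^+_w(\mathcal{C})\ge\sum_{j\in T}\mu_j(w)=\sum_{j=1}^{m+1}\mu_j(w)$. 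Since the jump of the counting function of $\mathcal{C}$ dominates, circle by circle, the jump of $\sum_{j}\bigl(n(\cdot,1/g_j)-\tilde n^{[m-1]}_\kappa(\cdot,1/g_j)\bigr)$, integration yields the clean inequality
\[
N\!\left(r,\frac{1}{\mathcal{C}}\right)\ \ge\ \sum_{j=1}^{m+1}\left(N\!\left(r,\frac{1}{g_j}\right)-\widetilde N^{[m-1]}_\kappa\!\left(r,\frac{1}{g_j}\right)\right),
\]
which is the Nevanlinna-theoretic form of the divisibility step used in the proof of Theorem~\ref{differenceMason_m+1}.

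\emph{Characteristic side.} Put $u(z)=\sup_{1\le k\le m}\log|g_k(z)|$, so that $\int_0^{2\pi}u(re^{i\theta})\,d\theta/(2\pi)=T_g(r)+O(1)$ and $\sum_{j=1}^{m+1}m_g(r,H_j)=\int_0^{2\pi}\bigl((m+1)u-\sum_{j=1}^{m+1}\log|g_j|\bigr)(re^{i\theta})\,d\theta/(2\pi)+O(1)$. For each $\theta$ choose $K=K(\theta)$ to be the complement in $\{1,\dots,m+1\}$ of an index $j_0$ maximising $|g_j(re^{i\theta})|$; then $\log|g_{j_0}|=u+O(1)$. Dividing the $i$-th row of $\det\bigl(g_i(z+(j-1)\kappa)\bigr)_{i\in K}$ by $g_i(z)$ writes $\pm\mathcal{C}=\prod_{i\in K}g_i\cdot D_K$, where $D_K$ is a polynomial in the shift quotients $g_i(z+\ell\kappa)/g_i(z)$ with $i\in K$ and $1\le\ell\le m-1$, and hence
\[
(m+1)u-\sum_{j=1}^{m+1}\log|g_j|\ \le\ m\,u-\log|\mathcal{C}|+\log^+\bigl|D_{K(\theta)}\bigr|.
\]
Integrating over $\theta$, applying Jensen's formula to the entire function $\mathcal{C}$, bounding $\int_0^{2\pi}\log^+|D_{K(\theta)}|\,d\theta/(2\pi)\le\sum_{K}\sum_{i\in K}\sum_{\ell=1}^{m-1}m\bigl(r,g_i(z+\ell\kappa)/g_i(z)\bigr)+O(1)$, and invoking the difference analogue of the lemma on the logarithmic derivative — a consequence of Lemma~\ref{technical}, giving $m\bigl(r,g_i(z+\ell\kappa)/g_i(z)\bigr)=o\bigl(T_g(r)/r^{1-\varsigma(g)-\varepsilon}\bigr)$ outside a set of finite logarithmic measure — we obtain
\[
\sum_{j=1}^{m+1}m_g(r,H_j)\ \le\ m\,T_g(r)-N\!\left(r,\frac{1}{\mathcal{C}}\right)+o\!\left(\frac{T_g(r)}{r^{1-\varsigma(g)-\varepsilon}}\right).
\]
Summing the first main theorem $m_g(r,H_j)+N(r,1/g_j)=T_g(r)+O(1)$ over $j=1,\dots,m+1$ to remove $\sum_j m_g(r,H_j)$, and then substituting the counting-side inequality to cancel the terms $N(r,1/g_j)$, leaves precisely \eqref{2ndMT}.

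The step I expect to be the main obstacle is the error-term analysis on the characteristic side: one must establish that the shift-quotient proximity functions $m\bigl(r,g_i(z+\ell\kappa)/g_i(z)\bigr)$ are of size $o\bigl(T_g(r)/r^{1-\varsigma(g)-\varepsilon}\bigr)$ as $r\to\infty$ outside a single exceptional set of finite logarithmic measure, uniformly over the finitely many subsets $K$ and shifts $\ell\kappa$ entering the argument, and then check that this absorbs the bounded contributions coming from Jensen's formula and the first main theorem. This is exactly where the hypothesis $\varsigma(g)<1$ and Lemma~\ref{technical} are indispensable, much as the classical truncated second main theorem relies on the ordinary lemma on the logarithmic derivative. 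A secondary technical point is the verification of $\mathcal{C}\not\equiv0$, i.e.\ that linear independence over $\mathcal{P}^1_\kappa$ — rather than merely over $\C$ — is the correct hypothesis guaranteeing a non-trivial Casoratian.
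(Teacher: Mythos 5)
Your argument is correct in substance, but it takes a longer road than the paper: you reconstruct the whole truncated difference second main theorem by Cartan's method (Casoratian in place of the Wronskian, a difference analogue of the lemma on the logarithmic derivative in place of the classical one), carrying out both the counting side and the characteristic side and then combining them with the first main theorem. The paper instead forms the single auxiliary function $G=g_1\cdots g_{m+1}/\mathcal{C}_\kappa(g_1,\dots,g_m)$ and quotes the already-established untruncated difference second main theorem \cite[Theorem~2.1]{halburdkt:14TAMS}, which yields $T_g(r)\leq N(r,1/G)+o\bigl(T_g(r)/r^{1-\varsigma(g)-\varepsilon}\bigr)$; the only new work in the paper is the local estimate $\ord_w^+(G)\leq\sum_{j}\bigl(\ord_w^+(g_j)-\min_{i}\ord_{w+i\kappa}^+(g_j)\bigr)$, obtained by writing $1/G$ as a determinant of shift quotients divided by a non-vanishing coordinate. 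Your counting-side inequality $N(r,1/\mathcal{C})\geq\sum_{j}\bigl(N(r,1/g_j)-\widetilde N^{[m-1]}_\kappa(r,1/g_j)\bigr)$ is precisely the integrated form of that local estimate (bounding $\ord_w(\mathcal{C})$ from below is equivalent to bounding $\ord_w(G)$ from above, since the two orders differ by $\sum_j\ord_w(g_j)$), and your choice of an $m$-element subset $K\supseteq T$ correctly handles the case where $g_{m+1}$ vanishes at $w$ while some $g_i$ with $i\leq m$ does not. What your route buys is self-containedness and transparency about where the hypothesis $\varsigma(g)<1$ enters; what it costs is that you must invoke the difference lemma on the logarithmic derivative, $m\bigl(r,g_i(z+\ell\kappa)/g_i(z)\bigr)=o\bigl(T_g(r)/r^{1-\varsigma(g)-\varepsilon}\bigr)$ outside a set of finite logarithmic measure, and this is \emph{not} a consequence of Lemma~\ref{technical} alone as you suggest: Lemma~\ref{technical} only compares $T(r+s)$ with $T(r)$, whereas the proximity estimate for shift quotients is a separate and deeper result of \cite{halburdkt:14TAMS} proved via a Poisson--Jensen argument. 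Once that attribution is corrected (and the routine comparison of $T(r,g_i)$ with $T_g(r)$ for a reduced representation is noted), your proof is complete and delivers the same inequality \eqref{2ndMT}.
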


\begin{proof}
Denote by $\mathcal{C}_\kappa(g_1 \dots g_m)$ the Casoratian of $g_1, \dots, g_m$, and define
$$
G=g_1 \cdots g_{m+1}/\mathcal{C}_\kappa(g_1 \dots g_m).
$$
Suppose $w$ is a zero of $G$. We assert that
\begin{equation}
\ord_w^+(G)\leq \sum_{j=1}^{m+1}\left(\ord_w^+(g_j)-\min_{i\in \{0,\dots, m-1\}} \{\ord^+_{w+i\kappa} (g_j)\}\right).\label{0.1}
\end{equation}
To confirm this, we write
$$
\frac{1}{G}= \frac{\epsilon}{g_{l_0}} \cdot
\left| \begin{array}{cccc}
1 & 1 & \ldots & 1 \\
\frac{g_{l_1}(z+\kappa)}{g_{l_1}(z)} & \frac{g_{l_2}(z+\kappa)}{g_{l_2}(z)} & \ldots & \frac{g_{l_m}(z+\kappa)}{g_{l_m}(z)}\\
\vdots & \vdots & \ddots & \vdots \\
\frac{g_{l_1}(z+(m-1)\kappa)}{g_{l_1}(z)} & \frac{g_{l_2}(z+(m-1)\kappa)}{g_{l_2}(z)} & \ldots & \frac{g_{l_m}(z+(m-1)\kappa)}{g_{l_m}(z)}
\end{array} \right|,
$$
where $\epsilon \in \{\pm 1\}$, and the indexes $\{l_1,\ldots,\l_m\}\subset\{1,\ldots,m+1\}$ and $l_0\in\{1,\ldots,m+1\}\setminus\{l_1,\ldots,\l_m\}$ depend on $z$ so that $g_{l_0}(z)\not=0$ for all $z\in\C$. We see that the zeros of $G$ are the poles of some of $g_{l_j}(z+i\kappa)/g_{l_j}(z)$, $1\leq j\leq m$, $1\leq i\leq m-1$. The maximal order of pole among the $j$th column in the determinant above is given by
$$
\max_{i\in \{0,\dots, m-1\}}\left(\ord_w^+(g_{l_j})- \{\ord^+_{w+i\kappa} (g_{l_j})\}\right)=\ord_w^+(g_{l_j})-\min_{i\in \{0,\dots, m-1\}} \{\ord^+_{w+i\kappa} (g_{l_j})\}.
$$
Since $g_{l_0}(z)\not=0$ for all $z\in\C$, we obtain \eqref{0.1}, and hence we have
\begin{equation}\label{nonint}
n\left(r,\frac{1}{G}\right)\leq \sum_{j=1}^{m+1}\tilde n_\kappa^{[m-1]}\left(r,\frac1{g_j}\right).
\end{equation}
The assertion now follows by integrating \eqref{nonint} and applying \cite[Theorem~2.1]{halburdkt:14TAMS}.
\end{proof}

%
%


\subsection{Proof of Proposition~\ref{diffFermatthm_entire}}

%
By denoting
    \begin{equation}\label{Nr}
    N(r)=\sup_{j\in\{1,\ldots,m+1\}} N\left(r,\frac{1}{g_j}\right),
    \end{equation}
where $g_j=[f_j]_\kappa^{\overline{n}}$, $j=1,\ldots,m+1$, and applying \eqref{2ndMT} we have
    \begin{equation*}
    N(r) \leq \sum_{j=1}^{m+1} \widetilde N^{[m-1]}_\kappa\left(r,\frac{1}{[f_j]_\kappa^{\overline{n}}}\right)  + o\left(T_g(r)\right)
    \end{equation*}
as $r\to\infty$ outside of an exceptional set $E$ of finite logarithmic measure. By defining
    \begin{equation}\label{Mr}
    M(r)=\sup_{j\in\{1,\ldots,m+1\}} N\left(r,\frac{1}{f_j}\right),
    \end{equation}
we have
    \begin{equation*}
    nM(r-n|\kappa|) \leq N(r) \leq n M(r+n|\kappa|)
    \end{equation*}
for all $r\geq n|\kappa|$, and so by Lemma~\ref{technical} it follows that
    \begin{equation*}
    N(r) = nM(r) + o(M(r))
    \end{equation*}
as $r\to\infty$ outside of an exceptional set $F$ of finite logarithmic measure. Therefore, Lemma~\ref{Truncation_lemma} yields
    \begin{equation*}
    \begin{split}
    n M(r) &= N(r) +  o\left(M(r)\right) \\
    &\leq \sum_{j=1}^{m+1}(m-1) N \left(r,\frac{1}{f_j}\right)  + o\left(M(r)\right) \\
    &\leq (m+1)(m-1)M(r)+ o\left(M(r)\right)
    \end{split}
    \end{equation*}
as $r\to\infty$ outside of $E\cup F$, and so $n\leq m^2-1$. \hfill$\Box$

\subsection{Proof of Proposition \ref{diffFermatthm_entire2}}

As in the proof of Proposition~\ref{diffFermatthm_entire}, we apply \eqref{2ndMT}, but now with $g_j=[f_j]_\kappa^{\overline{n}}$, $j=1,\ldots,m$. Note that then \eqref{Nr} reduces into
    \begin{equation*}
    N(r)=\sup_{j\in\{1,\ldots,m\}} N\left(r,\frac{1}{g_j}\right) + O(1),
    \end{equation*}
and we have
    \begin{equation*}
    N(r) \leq \sum_{j=1}^{m} \widetilde N^{[m-1]}_\kappa\left(r,\frac{1}{[f_j]_\kappa^{\overline{n}}}\right)  + o\left(T_g(r)\right)
    \end{equation*}
as $r\to\infty$ outside of an exceptional set $E$ of finite logarithmic measure. Similarly, \eqref{Mr} simplifies to
    \begin{equation*}
    M(r)=\sup_{j\in\{1,\ldots,m\}} N\left(r,\frac{1}{f_j}\right)  + O(1),
    \end{equation*}
and so
    \begin{equation*}
    nM(r-n|\kappa|) \leq N(r) \leq n M(r+n|\kappa|)
    \end{equation*}
for all $r\geq n|\kappa|$. Now Lemma~\ref{technical} yields
    \begin{equation*}
    N(r) = nM(r) + o(M(r))
    \end{equation*}
as $r\to\infty$ outside of an exceptional set $F$ of finite logarithmic measure. Therefore, by Lemma~\ref{Truncation_lemma}, we have
    \begin{equation*}
    \begin{split}
    n M(r) &= N(r) +  o\left(M(r)\right) \\
    &\leq \sum_{j=1}^{m}(m-1) N \left(r,\frac{1}{f_j}\right)  + o\left(M(r)\right) \\
    &\leq m(m-1)M(r)+ o\left(M(r)\right)
    \end{split}
    \end{equation*}
as $r\to\infty$ outside of $E\cup F$, and so $n\leq m^2-m$. \hfill$\Box$

%

\def\cprime{$'$}
\providecommand{\bysame}{\leavevmode\hbox to3em{\hrulefill}\thinspace}
\providecommand{\MR}{\relax\ifhmode\unskip\space\fi MR }
\providecommand{\MRhref}[2]{%
  \href{http://www.ams.org/mathscinet-getitem?mr=#1}{#2}
}
\providecommand{\href}[2]{#2}


\end{document}